\newcommand{\Ab}{\mathbf{A}}
\DeclareMathOperator{\curl}{curl}\DeclareMathOperator{\Div}{div}
\newtheorem{thm}{Theorem}[section]
\newtheorem{lem}[thm]{Lemma}
\newtheorem{theorem}[thm]{Theorem}
\newtheorem{assumption}[thm]{Assumption}
\newtheorem{lemma}[thm]{Lemma}
\newtheorem{proposition}[thm]{Proposition}
\newtheorem{corollary}[thm]{Corollary}
\theoremstyle{remark}
\newtheorem{rem}[thm]{Remark}
\newcommand{\nb}{\nabla}
\newcommand{\R}{\mathbb{R}}
\newcommand{\Fb}{\mathbf{F}}
\newcommand{\N}{\mathbb{N}}
\newcommand{\C}{\mathbb{C}}
\newcommand{\Hd}{H_{{\rm div}}^1(\Omega)}
\newcommand{\Om}{\Omega}
\newcommand{\kp}{\kappa}
\newcommand{\kn}{\nabla-i\kappa H{\bf A}}
\newcommand{\Es}{{\rm E}_{\rm g.st}(\kappa,  H)}
\newcommand{\GL}{\mathcal E_{\kappa,H}}
\newcommand{\Gb}{G_{b,Q_R}^\sigma}
\def\sig#1{\vbox{\hsize=5.5cm
\kern2cm\hrule\kern1ex
\hbox to \hsize{\strut\hfil #1 \hfil}}}
\newcommand\signatures[4]{%
\vspace{3cm}
\hbox to \hsize{\hfil #1, \today\hfil}
\vspace{3cm}
\hbox to \hsize{\quad#2\hfil\hfil #3\quad}
\vspace{3cm}
\hbox to \hsize{\hfil#4\hfil}}
\numberwithin{equation}{section}
\title[Ginzburg-Landau Functional]{The influence of magnetic steps on bulk superconductivity}
\author[W. Assaad]{Wafaa Assaad}
\address{Lebanese International  University, Department of Mathematics, Saida Campus, Lebanon}
\email{Wafaa\_assaad@hotmail.com}
\author[A. Kachmar]{Ayman Kachmar}
\address{Lebanese University, Department of Mathematics, Hadat, Lebanon}
\email{ayman.kashmar@gmail.com}
\date{\today}
\begin{document}
\maketitle
\begin{abstract}
We study the distribution of bulk superconductivity in presence of an applied  magnetic field, supposed to be a step function, modeled by  the Ginzburg-Landau theory. Our results are valid for the minimizers of the two-dimensional Ginzburg-Landau functional with a large Ginzburg-Landau parameter and with an applied magnetic field of intensity comparable with the Ginzburg-Landau parameter. 
%We observe that the presence of magnetic steps modify the distribution of bulk superconductivity. 
%  analyzing   The magnetic field is assumed to be a step function.
\end{abstract}

\section{Introduction and Main results}\label{sec:int}

\subsection*{Motivation}
The Ginzburg-Landau functional successfully models the response of a (Type~II) superconducting sample to an applied magnetic field. We focus on samples that  occupy a long cylindrical domain and subjected to a magnetic field with direction parallel to the axis of the cylinder. This situation has been analyzed in many papers, see for instance the two monographs \cite{Fr_Hl_10, SS-b}. However, in the literature, the focus was on uniform applied magnetic fields. Recently, attention has been shifted to non-uniform {\it smooth} magnetic fields in \cite{att1, att2, HeKa, PK}. Such magnetic fields may arise in the study of superconducting surfaces \cite{CL} or superconductors with applied electric currents \cite{AHP}. 

In this paper, we consider the situation when the applied magnetic field is a step function. Such fields might occur in many situations (cf. \cite{HPRS}). In particular
\begin{itemize}
\item If a sample is separated into two parts, one can apply on one part a uniform magnetic field from above, and on the other part, a uniform magnetic field from below (see Figure~2).
\item If a sample is not homogeneous, one can have a variable magnetic permeability. 
This leads to a magnetic step function (cf. \cite{chapman}).   
\end{itemize}
 
\subsection*{The functional}

Let $\Omega\subset\R^2$ be an open, bounded and simply connected set and $B_0:\Omega\to[-1,1]$ be a measurable function. The Ginzburg-Landau functional  in $\Omega$ is
\begin{equation}\label{eq:GL}
\GL (\psi,\Ab)= \int_\Om \Big( \big|(\nb-i\kp H {\bf
A})\psi\big|^2-\kp^2|\psi|^2+\frac{\kappa^2}{2}|\psi|^4 \Big)\,dx
 +\kp^2H^2\int_\Om\big|{\rm curl}{\bf A}-B_0\big|^2\,dx.
\end{equation}
Here, $\kp>0$ is the Ginzburg-Landau parameter, a characteristic of the superconducting material, $H > 0$ is the intensity of the applied magnetic field,  $\psi\in H^1(\Om, \C)$ and  ${\bf A}=(A_1,A_2) \in H^1(\Om, \R^2)$. In physics, the domain $\Omega$ is the cross section of the sample, the function $B_0$ is the applied magnetic field, the function $\psi$ is  the order parameter and the vector field $\Ab$ is  the magnetic potential. The configuration $(\psi,\Ab)$ is interpreted as follows, $|\psi|^2$ measures the density of the superconducting electron pairs and $\curl\Ab=\partial_{x_1}A_2-\partial_{x_2}A_1$ measures the induced magnetic field in the sample.

In this paper, we work under the following assumption on the function $B_0$  (these are illustrated in Figure~\ref{fig1}):

\begin{assumption}\label{assump}~
\begin{enumerate}
\item $a\in[-1,1)\setminus\{0\}$ is a given constant\,;
\item $\Omega_1\subset\Omega$ and $\Omega_2\subset\Omega$ are two disjoint open sets\,;
\item $B_0={\mathbf 1}_{\Omega_1}+a{\mathbf 1}_{\Omega_2}$\,;
\item $\Omega_1$ and $\Omega_2$ have a finite number of connected components\,;
\item $\partial\Omega_1$ and $\partial\Omega_2$ are piecewise smooth with (possibly) a finite number of corners\,;
\item $\Gamma=\partial\Omega_1\cap\partial\Omega_2$ is the union of a finite number of smooth curves\,;
\item $\Omega=\Omega_1\cup\Omega_2\cup\Gamma$ and $\partial\Omega$  is of class $C^4$\,;
\item $\Gamma\cap\partial\Omega$ is either empty or finite\,;
\item If $\Gamma\cap\partial\Omega$, then $\Gamma$ intersects $\partial\Omega$ transversely, i.e. $\nu_{\partial \Omega} \times \nu_\Gamma \neq 0$, where $\nu_{\partial \Omega}$ and  $\nu_\Gamma$ are respectively the unit  normal vectors of $\partial \Omega$ and $\Gamma$.
\end{enumerate}
\end{assumption}
\begin{figure}\label{fig1}
\centering
\includegraphics[scale=0.7]
%[width=0.5
%\paperwidth
%,height=0.2%
%%\paperheight
%]
{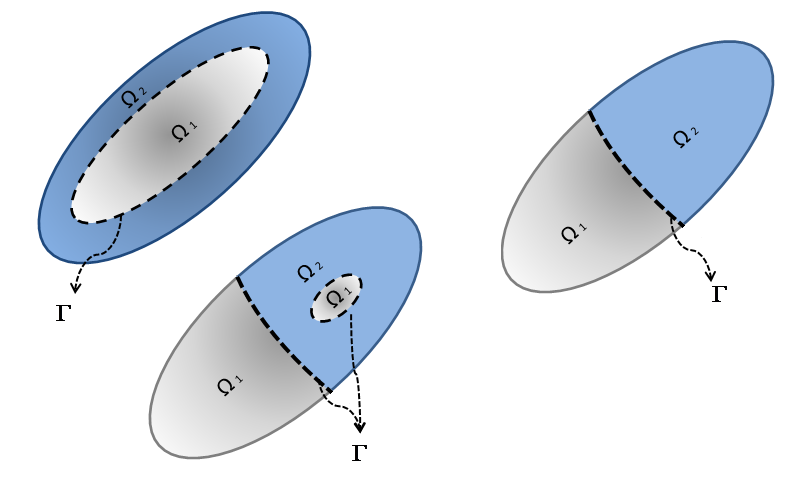} 
\caption{Schematic representations of the set $\Om$.} %with step function magnetic field.}
\end{figure}
\begin{figure}\label{fig2}
\centering
\includegraphics[scale=0.7]{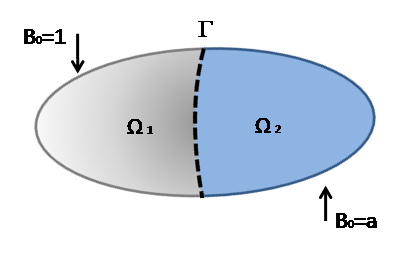} 
\caption{Schematic representations of the set $\Om$ subjected to a step magnetic field $B_0$. }
\end{figure}
We introduce the ground state energy
\begin{equation} \label{eq:gr_st}
\Es=\inf\{\GL(\psi,\Ab)~:~(\psi,{\bf A}) \in H^1(\Om;\C)\times\Hd\}
\end{equation}
where
\begin{equation}\label{eq:Hd}
\Hd=
\left\{
 {\bf A} \in H^1(\Om,\R^2)~:~ {\rm div}{\bf A}=0 \ {\rm in}\ \Om,\ {\bf A}\cdot\nu_{\partial\Omega}=0\ {\rm on}\ \partial \Om
\right\}\,.
\end{equation}
The functional $\GL$ is invariant under the gauge transformations $(\psi,\Ab)\mapsto (e^{i\varphi\kappa H}\psi,{\bf A}+ \nb \varphi)$. This gauge invariance yields (cf. \cite{Fr_Hl_10})
\begin{equation*}
\Es=\inf\{\GL(\psi,\Ab)~:~(\psi,{\bf A}) \in H^1(\Om;\C)\times H^1(\Om; \R^2)\}\,.
\end{equation*}

Critical points $(\psi, {\bf A}) \in H^1(\Om, \C)\times\Hd $ of $\GL$ are weak solutions of the following G-L equations:
\begin{equation}\label{eq:Euler}
\left\{
\begin{array}{ll}
\big(\kn\big)^2\psi=\kp^2(|\psi|^2-1)\psi &{\rm in}\ \Om\,,\\
-\nb^{\perp}  \big(\curl\Ab-B_0\big)= \frac{1}{\kp H}{\rm Im}\big(\overline{\psi}(\nb-i\kp H {\bf A})\psi\big) & {\rm in}\ \Om\,,\\
\nu\cdot(\kn)=0 & {\rm on}\ \partial \Om \,,\\
{\rm curl}{\bf A}=B_0 & {\rm on}\ \partial \Om \,.
\end{array}
\right.
\end{equation}
Here  $\nb^\perp =(\partial_{x_2},-\partial_{x_1})$ is the Hodge gradient.

\subsection*{Energy asymptotics and applications}

The statement of our main results involves a continuous function $g:[0,\infty)\to[-\frac12,0]$ constructed in \cite{SS02, FK-cpde}. The function $g$ is increasing and satisfies $g(0)=-\frac12$, $g(b)=0$ for all $b\geq1$, and $-\frac12<g(b)<0$ for all $b\in(0,1)$.

\begin{theorem}\label{thm:En_estim}{\bf[Ground state energy asymptotics]}~

Let $\tau\in(\frac32,2)$ and $0<c_1<c_2$ be constants. Under Assumption~\ref{assump}, there exist constants $C>0$ and $\kp_0>0$  such that if
\begin{equation}\label{eq:order}
\kp\geq \kp_0 \quad {\rm and}\quad c_1\leq \frac{H}{\kp}\leq c_2\,,
\end{equation}
then the ground state energy in \eqref{eq:gr_st} satisfies
\begin{equation}\label{eq:gr_estim}
-C\kappa^\tau\leq \Es-\kp^2\int_\Om g\left (\frac{H}{\kp}|B_0(x)| \right)dx \leq C\kp^{3/2}\,.
\end{equation}
\end{theorem}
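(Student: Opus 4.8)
The plan is to prove the upper and lower bounds in \eqref{eq:gr_estim} separately, following the now-standard strategy for Ginzburg-Landau energy asymptotics in the regime $H\sim\kappa$, but with the new feature that the magnetic field is piecewise constant with a jump across $\Gamma$. The reference local model is the function $g(b)$, which is the bulk (infinite-volume) energy density of the Ginzburg-Landau functional with \emph{constant} magnetic field of strength $b$; the key point is that, away from the step locus $\Gamma$ and from $\partial\Omega$, the field $B_0$ looks locally constant (equal to $1$ on $\Omega_1$, to $a$ on $\Omega_2$), so a partition of $\Omega$ into small squares and a comparison of the energy in each square with $g$ is natural. The small parameter governing the square size will be chosen as a negative power of $\kappa$, say $\ell=\kappa^{-\rho}$ with $\rho$ adjusted so that the accumulated errors from all squares are $O(\kappa^{3/2})$ (upper) and $O(\kappa^{\tau})$ with $\tau\in(\tfrac32,2)$ (lower); since there are $O(\ell^{-2})=O(\kappa^{2\rho})$ squares, each contributing an error that is a fixed power of $\ell$ times $\kappa^2$, balancing is what forces the exponent range $(\tfrac32,2)$ and the two different rates on the two sides.

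\emph{Upper bound.} I would construct a trial state $(\psi,\mathbf A)$ by choosing $\mathbf A=\mathbf A_0$, a fixed vector field with $\curl\mathbf A_0=B_0$ (which exists and is Lipschitz away from $\Gamma$ under Assumption~\ref{assump}), so that the magnetic term $\kappa^2H^2\int_\Omega|\curl\mathbf A-B_0|^2$ vanishes exactly. For $\psi$, partition $\Omega$ into squares $Q_j$ of side $\ell=\kappa^{-\rho}$; discard the squares meeting $\Gamma$ or $\partial\Omega$ (their total area is $O(\ell)$, costing $O(\kappa^2\cdot\ell)\cdot$--- this has to be kept $\le\kappa^{3/2}$, another constraint on $\rho$); on each interior square, which lies wholly in $\Omega_1$ or $\Omega_2$, insert a rescaled near-optimal minimizer for the constant-field problem with $b=\tfrac{H}{\kappa}|B_0|$, after a gauge change that replaces $\mathbf A_0$ by the linear potential of the constant field on that square (the gauge phase is controlled because $\mathbf A_0$ is Lipschitz and $\ell$ is small). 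Summing the square-by-square bounds reproduces $\kappa^2\int_\Omega g(\tfrac{H}{\kappa}|B_0|)\,dx$ up to the claimed remainder.

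\emph{Lower bound.} Here I would take an arbitrary minimizer $(\psi,\mathbf A)$ and first invoke the standard a priori estimates: $\|\psi\|_\infty\le1$, the Agmon/elliptic estimates giving $\|\curl\mathbf A-B_0\|_{L^2}$ and $\|\mathbf A-\mathbf A_0\|$ small (of order $\kappa^{-1}$ or so, using the second GL equation and the bound on the supercurrent), so that $\mathbf A$ may be replaced by $\mathbf A_0$ in the energy at acceptable cost. Then localize via an IMS-type partition of unity subordinate to the same grid of squares of side $\ell$: $\sum\chi_j^2=1$, $\sum|\nabla\chi_j|^2=O(\ell^{-2})$. On each interior square one bounds the localized energy from below by $\ell^2\kappa^2 g(\tfrac{H}{\kappa}|B_0|)$ up to errors, again after a gauge transformation flattening $\mathbf A_0$ to the constant-field linear potential; the squares touching $\Gamma$ or $\partial\Omega$ contribute a nonnegative (hence discardable, for a lower bound) amount modulo the $-\kappa^2|\psi|^2$ term, which on a region of area $O(\ell)$ is $O(\kappa^2\ell)$ and must be absorbed into $\kappa^\tau$. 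Collecting the IMS error $O(\ell^{-2})\times(\text{number of squares})\times\dots$ and the boundary-layer cost and optimizing $\rho$ yields $-C\kappa^\tau$.

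The main obstacle is the treatment of the squares straddling the magnetic step $\Gamma$ and the boundary $\partial\Omega$: there the local model is no longer the constant-field one, so one cannot compare with $g$, and one must instead show that the net energetic contribution of this $O(\ell)$-neighborhood is negligible on the scale $\kappa^{3/2}$ (upper) and $\kappa^\tau$ (lower). For the lower bound this is essentially free once one only throws away nonnegative terms and pays $O(\kappa^2\ell)$ for the potential term; for the upper bound one must genuinely exhibit that putting $\psi=0$ (or a cutoff interpolation) near $\Gamma\cup\partial\Omega$ is affordable, which is where the constraint $\tau>\tfrac32$, equivalently $\ell\ll\kappa^{-1/2}$ relative to the $\kappa^2$ prefactor, is used. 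A secondary technical point is making the gauge transformations on each square precise: the discrepancy between $\mathbf A_0$ and the frozen constant-field potential is $O(\ell^2)$ in sup norm on a square, producing a phase of size $\kappa H\ell^2=O(\kappa^2\ell^2)$ which must be small, giving yet another inequality on $\rho$ that is compatible with the previous ones precisely when $\tau$ ranges over $(\tfrac32,2)$.
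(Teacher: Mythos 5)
Your overall strategy coincides with the paper's: an upper bound via a trial state built from rescaled minimizers of the constant-field model on a lattice of squares of side $\ell=\kappa^{-1/2}$ (Proposition~\ref{prop:up}), and a lower bound via a priori estimates on critical points (Theorem~\ref{thm:priori}) followed by a square-by-square comparison with $g$ at scale $\ell=\kappa^{1-\tau}$ (Theorem~\ref{thm:lower_bound}); your error bookkeeping, the $O(\kappa^2\ell)$ cost of the layer around $\Gamma\cup\partial\Omega$, and your identification of where $\tau>\tfrac32$ enters all match the paper.

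There is, however, one step in your description that, taken literally, would sink the argument: your ``secondary technical point'' asserting that freezing the potential on a square produces a phase discrepancy of size $\kappa H\ell^2$ ``which must be small''. That constraint forces $\ell\ll\kappa^{-1}$, which is incompatible with \emph{both} of your scale choices: for the upper bound $\ell=\kappa^{-1/2}$ gives $\kappa H\ell^2\approx\kappa$, and the per-square remainder $CR/b\approx C\ell\kappa$ summed over $O(\ell^{-2})$ squares gives $C\kappa/\ell$, which exceeds $\kappa^2$ as soon as $\ell\ll\kappa^{-1}$; for the lower bound $\ell=\kappa^{1-\tau}$ with $\tau<2$ again gives $\kappa^2\ell^2\gg1$. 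So your claim that this extra inequality is ``compatible with the previous ones precisely when $\tau\in(\tfrac32,2)$'' is wrong. The resolution, which is exactly the point of the step-field structure and of Lemma~\ref{A_2}, is that on any square contained in $\Omega_1$ or $\Omega_2$ the field $B_0$ is \emph{exactly} constant, so $\Fb-B_0(z)\Ab_0(\cdot-z)$ is curl-free on the (simply connected) square and can be gauged away \emph{exactly}: there is no $O(\ell^2)$ freezing error at all. (In the lower bound the only approximation is replacing $\Ab$ by $\Fb$ plus a linear gradient, with pointwise error $C\kappa^{-1}\ell^\alpha$ from the $C^{0,\alpha}$ bound of Theorem~\ref{thm:priori}, absorbed by Cauchy--Schwarz with $\delta=\ell$.) You should excise the $\kappa H\ell^2$ constraint and replace it by the exact gauge identity; otherwise the parameter choices cannot be made consistent. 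A minor further difference: the paper does not use an IMS partition of unity for the lower bound but simply tiles $D_\ell$ by disjoint squares and uses additivity of $\mathcal E_0$, comparing on each square with the free minimization $m(b,R)$ rather than $m_0(b,R)$; your IMS variant with $\sum\chi_j^2=1$ would also work (the localization error $O(\ell^{-2})=O(\kappa^{2\tau-2})$ is admissible for $\tau\leq2$), at the price of an extra term.
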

\begin{corollary}\label{cor:psi_estim}{\bf[$L^4$-norm asymptotics of the order parameter]}~

Under the assumptions of Theorem~\ref{thm:En_estim}, there exist
constants $\kappa_0>0$ and $C>0$ such that, if  \eqref{eq:order}
holds, then:
\begin{enumerate}
\item For every critical point $(\psi,\Ab)$ of \eqref{eq:GL},
$$\int_\Om |\psi|^4\,dx+2\int_\Om g\left(\frac{H}{\kappa}|B_0(x)|\right)\, dx\leq C\kappa^{\tau-2}\,.$$
\item For every minimizer $(\psi,{\bf A})$  of~\eqref{eq:GL},
$$\left| \int_\Om |\psi|^4\,dx+2\int_\Om g\left(\frac{H}{\kappa}|B_0(x)|\right)\, dx \right|\leq C \kappa^{\tau-2}\,,$$
and
$$\kappa^2H^2\int_\Omega|\curl\Ab-B_0|^2\,dx\leq C\kappa^\tau\,.$$
\end{enumerate}
\end{corollary}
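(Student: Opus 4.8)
The plan is to exploit the variational (Euler--Lagrange) structure together with the two-sided energy bound of Theorem \ref{thm:En_estim}. First I would derive the fundamental identity that holds for \emph{any} critical point $(\psi,\Ab)$ of $\GL$: testing the first equation in \eqref{eq:Euler} against $\psi$ itself and integrating by parts (using the boundary condition $\nu\cdot(\kn)\psi=0$ on $\partial\Om$), one obtains
\begin{equation*}
\int_\Om \big|(\kn)\psi\big|^2\,dx = \kp^2\int_\Om\big(|\psi|^2-1\big)|\psi|^2\,dx = \kp^2\int_\Om|\psi|^4\,dx - \kp^2\int_\Om|\psi|^2\,dx.
\end{equation*}
Substituting this back into \eqref{eq:GL} collapses the kinetic and quadratic terms and yields the clean relation
\begin{equation*}
\GL(\psi,\Ab) = -\frac{\kp^2}{2}\int_\Om|\psi|^4\,dx + \kp^2H^2\int_\Om\big|\curl\Ab-B_0\big|^2\,dx
\end{equation*}
valid at every critical point. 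This is the engine of the whole corollary.

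Next, for part (1), I would bound $\GL(\psi,\Ab)$ from below by the ground state energy $\Es$ (since a critical point need not be a minimizer, this is all we have: $\GL(\psi,\Ab)\ge \Es$ is \emph{false} in general — instead I use that $\GL(\psi,\Ab)\ge -\frac{\kp^2}{2}\int_\Om|\psi|^4$ is an equality up to the nonnegative field term, so I rather bound the \emph{other} way). Concretely, from the displayed identity and the nonnegativity of the magnetic term,
\begin{equation*}
-\frac{\kp^2}{2}\int_\Om|\psi|^4\,dx \le \GL(\psi,\Ab),
\end{equation*}
and I need an \emph{upper} bound on $\GL(\psi,\Ab)$ for critical points. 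Here one uses that the decomposition of the functional at a critical point forces $\GL(\psi,\Ab)\le \GL(0,\Ab_{B_0})$ is again not automatic; the correct route is: the lower bound in \eqref{eq:gr_estim} applied with the trial-state-free argument gives $\GL(\psi,\Ab)\ge \kp^2\int_\Om g(\tfrac H\kp|B_0|)\,dx - C\kp^\tau$ for \emph{any} configuration of sufficiently low energy, in particular this lower bound is really a pointwise-in-configuration statement in its proof (a local energy estimate), so it applies to every critical point. Combining with the identity gives
\begin{equation*}
-\frac{\kp^2}{2}\int_\Om|\psi|^4\,dx + \kp^2H^2\int_\Om|\curl\Ab-B_0|^2\,dx \ge \kp^2\int_\Om g\Big(\tfrac H\kp|B_0|\Big)\,dx - C\kp^\tau,
\end{equation*}
and dropping the nonnegative magnetic term and dividing by $\kp^2$ yields exactly
\begin{equation*}
\int_\Om|\psi|^4\,dx + 2\int_\Om g\Big(\tfrac H\kp|B_0|\Big)\,dx \le C\kp^{\tau-2}.
\end{equation*}

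For part (2), a minimizer additionally satisfies $\GL(\psi,\Ab)\le \GL(0,\mathbf F)$ where $\mathbf F\in\Hd$ is chosen with $\curl\mathbf F = B_0$ (possible since $\int_\Om B_0$ need not vanish but one works with the projected field; in any case $\GL(0,\mathbf F)=0$ after the gauge/field adjustment, or more precisely one uses the genuine upper bound from Theorem \ref{thm:En_estim}): $\Es\le \kp^2\int_\Om g(\tfrac H\kp|B_0|)\,dx + C\kp^{3/2}$. Feeding this into the critical-point identity in the reverse direction,
\begin{equation*}
-\frac{\kp^2}{2}\int_\Om|\psi|^4\,dx + \kp^2H^2\int_\Om|\curl\Ab-B_0|^2\,dx = \GL(\psi,\Ab) = \Es \le \kp^2\int_\Om g\Big(\tfrac H\kp|B_0|\Big)\,dx + C\kp^{3/2},
\end{equation*}
so since the magnetic term is nonnegative,
\begin{equation*}
-\frac{\kp^2}{2}\int_\Om|\psi|^4\,dx \le \kp^2\int_\Om g\Big(\tfrac H\kp|B_0|\Big)\,dx + C\kp^{3/2},
\end{equation*}
which after rearrangement gives the lower bound $\int_\Om|\psi|^4 + 2\int_\Om g \ge -C\kp^{-1/2}\ge -C\kp^{\tau-2}$; combined with part (1) this gives the two-sided estimate. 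Finally, for the field estimate, I rearrange the same identity to isolate the magnetic term:
\begin{equation*}
\kp^2H^2\int_\Om|\curl\Ab-B_0|^2\,dx = \Es + \frac{\kp^2}{2}\int_\Om|\psi|^4\,dx \le \Big(\Es - \kp^2\!\int_\Om g\Big) + \frac{\kp^2}{2}\Big(\int_\Om|\psi|^4 + 2\!\int_\Om g\Big) \le C\kp^\tau,
\end{equation*}
using the upper bound from \eqref{eq:gr_estim} for the first parenthesis (bounded by $C\kp^{3/2}\le C\kp^\tau$) and part (1) of the corollary, multiplied by $\kp^2$, for the second (bounded by $C\kp^\tau$).

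The main obstacle is the treatment of non-minimizing critical points in part (1): one cannot simply invoke $\GL(\psi,\Ab)\ge\Es$ to go the wrong way, so one must verify that the \emph{lower} bound of Theorem \ref{thm:En_estim} is established by a method (a local/uniform energy bound on small squares, à la \cite{SS02, FK-cpde}) that applies configuration-by-configuration to any solution of the Ginzburg-Landau system with a priori control $\|\psi\|_\infty\le 1$ — the latter itself following from the maximum principle applied to the first equation in \eqref{eq:Euler}. I would state this as a preliminary remark or lemma and then the rest is the elementary algebra above. Everything else is routine rearrangement of the critical-point identity.
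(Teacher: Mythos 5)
Your overall strategy coincides with the paper's: the integration-by-parts identity at a critical point, a configuration-wise lower bound for part (1), and the ground-state upper bound for part (2). Part (2) and the magnetic-energy estimate are correct as written. However, part (1) contains a step that fails. You apply the lower bound to the \emph{full} functional, obtaining
\begin{equation*}
-\tfrac{\kp^2}{2}\int_\Om|\psi|^4\,dx+\kp^2H^2\int_\Om|\curl\Ab-B_0|^2\,dx\;\geq\;\kp^2\int_\Om g\Big(\tfrac H\kp|B_0|\Big)\,dx-C\kp^\tau\,,
\end{equation*}
and then ``drop the nonnegative magnetic term.'' That term sits on the \emph{larger} side of the inequality: from $A+M\geq B$ with $M\geq0$ one only gets $A\geq B-M$, not $A\geq B$. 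For a general critical point the only a priori control on $M=\kp^2H^2\|\curl\Ab-B_0\|_{L^2(\Om)}^2$ is $O(\kp^2)$ (via $\|\curl\Ab-B_0\|_{L^2}\leq C\kp^{-1}$ from the second Ginzburg--Landau equation), which is far too large to absorb into an $O(\kp^\tau)$ error with $\tau<2$. So the conclusion of part (1) does not follow from the displayed inequality.

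The repair is precisely the point you gesture at but do not make precise: the square-by-square lower bound (the paper's Theorem~\ref{thm:lower_bound}, with $h=1$, $D=\Om$) is a statement about the reduced energy $\mathcal E_0(\psi,\Ab;\Om)$ --- kinetic plus potential terms, with \emph{no} magnetic term --- and it holds for every critical point because the a priori estimates of Theorem~\ref{thm:priori} do. Since your integration-by-parts identity says exactly $\mathcal E_0(\psi,\Ab;\Om)=-\frac{\kp^2}{2}\int_\Om|\psi|^4\,dx$, combining the two gives part (1) directly, the magnetic term never entering. With part (1) so established, your derivations of the two-sided bound and of the field estimate for minimizers go through unchanged and match the paper's. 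A minor aside: $\GL(\psi,\Ab)\geq\Es$ is true for \emph{every} configuration (it is the definition of the infimum); it is merely useless here, not false as you claim in passing.
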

\begin{theorem}\label{thm:psi-estim}{\bf[Distribution of the $L^4$-norm of the order parameter]}~

Suppose that the assumptions of Theorem~\ref{thm:En_estim} are
satisfied. Let $D\subset\Omega$ be an open set with a smooth
boundary. There exist  $\kappa_0>0$   and a function ${\rm err}:(\kappa_0,\infty)\to\R_+$ such that $\displaystyle\lim_{\kappa\to\infty}{\rm err}(\kappa)=0$ and, if
\begin{itemize}
\item \eqref{eq:order} holds\,;
\item $(\psi,\Ab)$ is a minimizer of
\eqref{eq:GL}\,;
\end{itemize}
 then
$$\left| \int_D |\psi|^4\,dx+2\int_D g\left(\frac{H}{\kappa}|B_0(x)|\right)\, dx \right|\leq {\rm err}(\kappa)\,.$$
\end{theorem}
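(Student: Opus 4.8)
The plan is to localize the global energy estimate of Theorem~\ref{thm:En_estim} to the open set $D$ and to its complement $\Omega\setminus\overline D$, and then to combine the two. First I would apply Corollary~\ref{cor:psi_estim}(2) to get the global bound
\[
\left|\int_\Om|\psi|^4\,dx+2\int_\Om g\!\left(\tfrac{H}{\kp}|B_0|\right)dx\right|\le C\kp^{\tau-2},
\]
together with $\kp^2H^2\int_\Om|\curl\Ab-B_0|^2\,dx\le C\kp^\tau$. The idea is that the $L^4$-integral is (up to negligible errors) additive over a partition of $\Omega$, so it suffices to bound $\int_D|\psi|^4+2\int_D g$ from above; the matching lower bound then follows by applying the upper bound to $\Omega\setminus\overline D$ and subtracting from the global equality. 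Thus the real task is a one-sided \emph{local} energy estimate on an arbitrary smooth subdomain.

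For the upper bound on $D$, I would run the construction used to prove the upper bound $\Es-\kp^2\int_\Om g(\cdots)\le C\kp^{3/2}$ in Theorem~\ref{thm:En_estim}, but as a \emph{local} energy comparison. Concretely: start from the minimizer $(\psi,\Ab)$, and on $D$ replace $\psi$ by a test configuration built from the periodic minimizers defining $g$ (rescaled to the local field strength $\tfrac{H}{\kp}|B_0|$, which is essentially locally constant on a scale $\kp^{-1/2}$ away from $\Gamma$ and $\partial\Omega$), glued to the original minimizer across $\partial D$ via a cut-off. Using that $(\psi,\Ab)$ is a critical point (so it satisfies the Euler--Lagrange equations \eqref{eq:Euler}, giving a priori bounds such as $\|\psi\|_\infty\le1$, $\|\nabla\psi\|_{L^2}^2\lesssim\kp^2$, and $\|\curl\Ab-B_0\|$ small from the second Corollary estimate), the difference of the local energies $\GL^D(\psi,\Ab)-\GL^D(\text{test})$ controls $\int_D(|\psi|^4-(-2g))\,dx$ up to an error $o(\kp^2)$, because the gluing layer near $\partial D$ has area $O(\kp^{-1/2})$ and the contributions of the corners of $\Gamma$, of $\Gamma\cap\partial\Omega$ (finite by Assumption~\ref{assump}(8)), and of a $\kp^{-1/2}$-neighborhood of $\Gamma\cup\partial\Omega$ are all of lower order. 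The magnetic-field error is handled by \eqref{eq:Euler}(last line) and the curl estimate, replacing $\Ab$ by a potential with $\curl=B_0$ at the cost of $o(1)$ after dividing by $\kp^2$. Here one needs the now-standard fact (from \cite{SS02, FK-cpde} and the smooth-field analyses \cite{att1, att2, HeKa, PK}) that on squares of size $\kp^{-1/2}$ where $B_0$ is constant, the local ground-state energy density is $g$ of the local field, with an effective error that sums to $o(\kp^2)$ over $\Omega$.

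The main obstacle I anticipate is the treatment of the interface $\Gamma$ where $B_0$ jumps, together with the corners of $\partial\Omega_1,\partial\Omega_2$ and the transverse crossings $\Gamma\cap\partial\Omega$: near $\Gamma$ the field is not locally constant, so the $g$-density heuristic fails in a tube of width $\sim\kp^{-1/2}$, and one must show this tube contributes $o(\kp^2)$ to \emph{both} the true energy and the model energy. This follows from the a priori bound $|\psi|\le1$ (hence $\int_{\text{tube}}|\psi|^4\le|\text{tube}|=O(\kp^{-1/2}\cdot\kp^0)=o(\kp^2)$ once one notes $|\Gamma|$ is finite) and boundedness of $g$, but it has to be done carefully so that the final error function ${\rm err}(\kappa)$ is \emph{uniform}, depending on $D$ only through fixed geometric quantities. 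Finally, I would collect the upper bounds on $D$ and on $\Omega\setminus\overline D$, add them, and compare with the global equality from Corollary~\ref{cor:psi_estim}(2); the two one-sided estimates pinch $\int_D|\psi|^4+2\int_D g$ to within $o(1)$, which defines ${\rm err}(\kappa)\to0$ and completes the proof.
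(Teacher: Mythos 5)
Your overall skeleton --- the global two-sided estimate from Corollary~\ref{cor:psi_estim}, a one-sided local estimate on $D$ and on $\Omega\setminus D$, then pinching against the complement --- is exactly the paper's, and you correctly identify the upper bound $\int_D|\psi|^4\le -2\int_D g+o(1)$ as the real task. The gap is in the mechanism you propose for that bound. A trial-state comparison based on minimality can only produce an \emph{upper} bound on the local energy of the minimizer, $\mathcal E_0(\psi,\Ab;D)\le\kappa^2\int_D g\,dx+o(\kappa^2)$; when combined with the (cutoff) integration-by-parts identity, which gives $\mathcal E_0(\chi_\ell\psi,\Ab;D)\le-\tfrac12\kappa^2\int_D\chi_\ell^4|\psi|^4\,dx+C\ell^{-1}$, an upper bound on the energy translates into $\int_D|\psi|^4\ge-2\int_D g-o(1)$, i.e.\ the \emph{lower} bound on the $L^4$-mass --- the wrong direction. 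To obtain the needed upper bound on $\int_D|\psi|^4$ one must bound the local energy from \emph{below}, $\mathcal E_0(h\psi,\Ab;D)\ge\kappa^2\int_D g\,dx-C\kappa^\tau$, uniformly for cutoffs $h$; this is Theorem~\ref{thm:lower_bound}, proved via the lattice-of-squares decomposition, the a priori estimate $\|\Ab-\Fb\|_{C^{0,\alpha}(\overline\Omega)}\le C/\kappa$ from Theorem~\ref{thm:priori}, and the asymptotics $m(b,R)/R^2\to g(b)$. Your proposal never invokes any such lower bound, so the key inequality is not established; the sentence ``the difference of the local energies controls $\int_D(|\psi|^4-(-2g))$'' hides precisely this sign issue.

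Two further remarks. The paper in fact needs no local trial states at all for this theorem: the only upper bound it uses is the global one (Proposition~\ref{prop:up}, already encoded in \eqref{low_psi}), and all localization is done through the identity obtained by multiplying the first equation of \eqref{eq:Euler} by $\chi_\ell^2\overline\psi$ and integrating by parts --- a step you omit but which is indispensable, since without a cutoff $\mathcal E_0(\psi,\Ab;D)$ and $-\tfrac12\kappa^2\int_D|\psi|^4$ differ by uncontrolled boundary terms on $\partial D$. Moreover, even if you added the missing lower bound and pinched at the level of energies, your gluing layer would require $\int_{D\setminus D_\ell}|(\nabla-i\kappa H\Ab)\psi|^2\,dx=o(\kappa^2)$, which does not follow from the global bound $C\kappa^2$ of Theorem~\ref{thm:priori} and again needs the local lower bound. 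Finally, note that $\Omega\setminus D$ may have boundary cusps, so only $|(\Omega\setminus D)\setminus(\Omega\setminus D)_\ell|\to0$ (not $O(\ell)$) is available there; this is why the theorem's error is stated as $\mathrm{err}(\kappa)=o(1)$ rather than a power of $\kappa$.
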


\subsection*{Discussion of Theorem~\ref{thm:psi-estim}}

The result in Theorem~\ref{thm:psi-estim} displays the strength of the superconductivity in the bulk of $\Omega$. We will use the following notation. Let $\omega\subset\R^2$ be an open set, $(f_\kappa)$ be a sequence in $L^\infty(\omega)$, $\alpha\in\C$ and $dx$ be the Lebesgue measure in $\R^2$. By writing
$$f_\kappa dx\rightharpoonup \alpha dx\quad{\rm in~}\omega$$
we mean, for every ball $B\subset \omega$,
$$\int_B f_\kappa dx\to \alpha |B|\,.$$
Now we return back to the result in Theorem~\ref{thm:psi-estim}. Suppose that $H=b\kappa$ and $b\in(0,\infty)$ is a constant. Let us start by examining the case where $-1<a<1$. We observe that:
\begin{enumerate}
\item If $0<b<1$, then
$$|\psi|^4\,dx\rightharpoonup -2g(b)\,dx~{\rm in~}\Omega_1\quad{\rm and}\quad |\psi|^4\,dx\rightharpoonup -2g(b|a|)\,dx~{\rm in~}\Omega_2\,. $$
This means that the bulk of $\Omega$ carries superconductivity everywhere, but since $0<|a|<1$, $0<-g(b)<-g(b|a|)$ and the strength of superconductivity in $\Omega_1$ is smaller than that in $\Omega_2$.
\item If $1\leq b<\frac1{|a|}$, then 
$$|\psi|^4\,dx\rightharpoonup 0~{\rm in~}\Omega_1\quad{\rm and}\quad |\psi|^4\,dx\rightharpoonup -2g(b|a|)\,dx~{\rm in~}\Omega_1\,, $$
with $g(b|a|)<0$. In this regime, superconductivity disappears in the bulk of $\Omega_1$ but persists in the bulk of $\Omega_2$. Theorem~\ref{thm:exp-dec} below will sharpen this point by establishing that $|\psi|$ is exponentially small in the bulk of $\Omega_1$ (see Figure~3). However, in light of the analysis in the book of Fournais-Helffer \cite{Fr_Hl_10}, the boundary of $\Omega_1$ may carry superconductivity. This point deserves a detailed analysis.
\item If $b> \frac1{|a|}$, then superconductivity disappears in the bulk of $\Omega_1$ and $\Omega_2$ (see Figure~4). However, one might find an interesting behavior near the critical value $b\sim \frac1{|a|}$. In the spirit of the   analysis in \cite{FK-adm},  one expects to find  superconductivity in the bulk of $\Omega_2$, but  with a weak strength. This superconductivity is   evenly  distributed   and  decays as $b$ gradually increases past  the value $\frac1{|a|}$.
\item {\bf[Break down of superconductivity (\cite{GP})]} If $b\gg \frac1{|a|}$, one expects that $\psi=0$ and superconductivity is lost in the sample. To this end, the spectral analysis  in \cite{HPRS} must be useful. In the spirit of the book \cite{Fr_Hl_10}, this regime is related to the analysis of the third critical field(s) where the transition to the purely normal state occurs.
\end{enumerate}

\begin{figure}
\centering
\begin{minipage}{0.47\textwidth}
\centering
\includegraphics[scale=0.4]{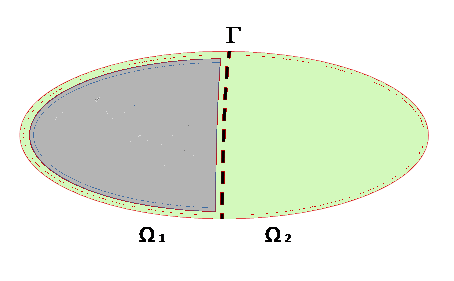}\label{Fig:3}
\caption{Schematic representation of $\Omega$ in the regime $\frac H \kp=b$ where $1 < b< \frac 1 {|a|}$.
	The dark  region is in a normal state, the bright region may carry superconductivity.}
\end{minipage}\hfill
\begin{minipage}{0.47\textwidth}
\centering
\includegraphics[scale=0.4]{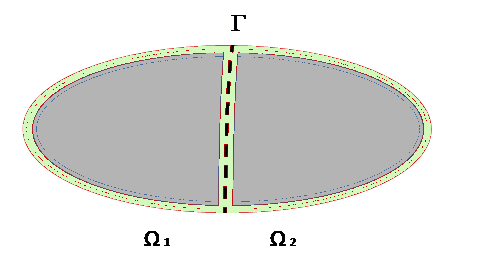}\label{Fig:4}
\caption{Schematic representation of $\Omega$ in the regime $\frac H \kp=b$ where $ b > \frac 1 {|a|}$.
	The dark region is in a normal state, the bright region may carry superconductivity.}
\end{minipage}
\end{figure}

The interesting case $a=-1$ is reminiscent of the situation of  a {\it smooth} and sign-changing   magnetic field analyzed  in the paper by Helffer-Kachmar \cite{HeKa}. Note that Theorem~\ref{thm:psi-estim} yields that superconductivity is evenly distributed in $\Omega_1$ and $\Omega_2$ as long as $0<b<1$. In the critical regime $b\sim 1$, one might find that superconductivity is distributed along the curve $\Gamma$ that separates $\Omega_1$ and $\Omega_2$, in the same spirit of the paper \cite{HeKa}. This behavior is illustrated in Figure~5.
\begin{figure}
\centering
\includegraphics[scale=0.5]{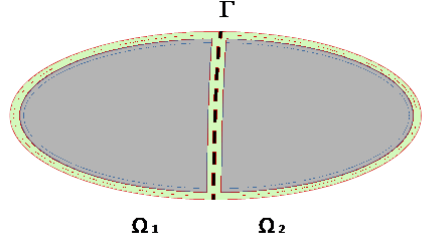} \label{fig5}
\caption{Schematic representation of the set $\Om$ in the regime $\frac H \kp = b$ where $a=-1$ and $b>1$. The dark region is in a normal state, the bright region may carry superconductivity. }
\end{figure} 

\subsection*{Exponential decay in regions with larger magnetic intensity}

Our last result establishes a regime for the strength of the magnetic field where the order parameter is exponentially small in the bulk of $\Omega_1$. The relevance of this theorem is that together with Theorem~\ref{thm:psi-estim}, display a regime of the intensity of the applied magnetic field  such that $|\psi|^2$ is exponentially small in the bulk of $\Omega_1$ while it is of the order   
$O(1)$ in $\Omega_2$.

%{\clr Comment: The next theorem is valid for all $\varepsilon<\sqrt{\lambda}$. }

\begin{thm}\label{thm:exp-dec}{\bf[Exponential decay of the order parameter]}~

Let $\lambda,\varepsilon,c_2>0$ be   constants such that $0<\varepsilon<\sqrt{\lambda}$ and $1+\lambda< c_2$. 
There exist constants $C,\kappa_0>0$ such that, if 
$$\kappa\geq\kappa_0\,,\quad (1+\lambda)\kappa \leq H\leq c_2\kappa\,,\quad (\psi,\Ab)_{\kappa,H}~{\rm is~a~minimizer~of~}\eqref{eq:GL}\,,$$ 
then
\begin{multline*}
\int_{\Omega_1\cap\{{\rm dist}(x,\partial\Omega_1)\geq \frac{1}{\sqrt{\kappa H}}\}}
\Big(|\psi|^2+{\frac{1}{\kp H}}|(\nabla-i\kappa H\Ab)\psi|^2\Big)\,\exp\Big(2\varepsilon\sqrt{\kappa H}\,{\rm dist}(x,\partial\Omega_1)\Big)dx\\
\leq
C \int_{\Omega_1\cap\{{\rm dist}(x,\partial\Omega_1)\leq \frac{1}{\sqrt{\kappa H}}\}}
|\psi|^2\,dx\,.\end{multline*}
\end{thm}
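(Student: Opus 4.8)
The plan is to run the standard Agmon-type estimate on the order parameter, localized to the region where the magnetic field has intensity $|B_0|=1$, i.e.\ on $\Omega_1$, and to exploit that in the regime $H\geq(1+\lambda)\kappa$ the magnetic Laplacian has no bulk spectrum below a positive threshold. Concretely, I would start from the first Ginzburg--Landau equation in \eqref{eq:Euler}. Testing it against $\chi^2 e^{2\Phi}\psi$, where $\chi$ is a cutoff supported in $\Omega_1$ and $\Phi=\varepsilon\sqrt{\kappa H}\,\dist(x,\partial\Omega_1)$ is the Agmon weight (truncated so that $\Phi$ stays bounded, to justify the integrations by parts, and the truncation removed at the end by monotone convergence), one obtains after integration by parts the identity
\begin{equation*}
\int_{\Omega_1}\Big(|(\nabla-i\kappa H\Ab)(\chi e^{\Phi}\psi)|^2-\kappa^2|\chi e^{\Phi}\psi|^2\Big)\,dx
= \int_{\Omega_1}\big(|\nabla\chi|^2+|\nabla\Phi|^2\big)e^{2\Phi}|\psi|^2\,dx - \kappa^2\int_{\Omega_1}|\psi|^2|\chi e^{\Phi}\psi|^2\,dx,
\end{equation*}
where the last term is nonnegative and can be dropped. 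Here I use $|\psi|\leq1$ (the standard a priori bound for minimizers, obtained from the maximum principle applied to the first GL equation) to control the magnetic potential perturbations and the error terms coming from replacing $\curl\Ab$ by $B_0$; the bound $\kappa^2H^2\int_\Omega|\curl\Ab-B_0|^2\leq C\kappa^\tau$ from Corollary~\ref{cor:psi_estim} quantifies this replacement.

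The key spectral input is the lower bound
\begin{equation*}
\int_{\Omega_1}|(\nabla-i\kappa H\Ab)u|^2\,dx \geq \kappa H\,(1-o(1))\int_{\Omega_1}|u|^2\,dx
\end{equation*}
for $u$ supported away from $\partial\Omega_1$, which follows from the fact that $\curl(\kappa H\Ab)\approx \kappa H B_0 = \kappa H$ on $\Omega_1$ together with the classical bound $\int|(\nabla-iF\mathbf A)u|^2\geq \int |\curl(F\mathbf A)|\,|u|^2$ valid on $\R^2$ (Lavine--O'Carroll), modulo the curl error controlled as above. Since $H\geq(1+\lambda)\kappa$, this gives $\int_{\Omega_1}|(\nabla-i\kappa H\Ab)u|^2\geq (1+\lambda)\kappa^2(1-o(1))\int_{\Omega_1}|u|^2$, so the quadratic form $\int(|(\nabla-i\kappa H\Ab)v|^2-\kappa^2|v|^2)$ is bounded below by $\lambda\kappa^2(1-o(1))\int|v|^2$ for $v=\chi e^\Phi\psi$. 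Feeding this into the identity above, and using $|\nabla\Phi|^2=\varepsilon^2\kappa H\leq \varepsilon^2 c_2\kappa^2$ with $\varepsilon^2<\lambda$, the weight term $|\nabla\Phi|^2 e^{2\Phi}|\psi|^2$ is absorbed into the left-hand side (this is where the hypothesis $\varepsilon<\sqrt\lambda$ is used, with a little room to spare to swallow the $o(1)$), leaving only the commutator term $|\nabla\chi|^2e^{2\Phi}|\psi|^2$, which is supported in the thin layer $\{\dist(x,\partial\Omega_1)\leq \frac{1}{\sqrt{\kappa H}}\}$ where $|\nabla\chi|^2=O(\kappa H)$ and $e^{2\Phi}=O(1)$. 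Also adding a fraction of the magnetic energy back (keeping $\frac{1}{\kappa H}|(\nabla-i\kappa H\Ab)\psi|^2$ on the left by not absorbing all of the spectral gap) yields exactly the asserted inequality.

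The main obstacle, and the point requiring the most care, is the interaction between the Agmon weight and the geometry near $\partial\Omega_1$: the distance function $\dist(x,\partial\Omega_1)$ is only Lipschitz (not smooth) and $\partial\Omega_1$ may have corners by Assumption~\ref{assump}(5), so one must either work with the Lipschitz weight directly (its gradient has modulus $1$ a.e., which is all that is needed) or regularize it, and check that the cutoff layer of width $(\kappa H)^{-1/2}$ has the right volume and that $|\nabla\chi|$ has the claimed size there. A secondary technical point is that $\Omega_1$ need not be smooth and may meet $\partial\Omega$, so the integration by parts must be performed on $\Omega_1$ with the cutoff $\chi$ vanishing near $\partial\Omega_1$ from inside $\Omega_1$; since all test functions are then compactly supported in the open set $\Omega_1$, no boundary terms on $\partial\Omega_1$ arise and the GL boundary conditions on $\partial\Omega$ are irrelevant. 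Finally one should double-check that the a priori estimate $\|\psi\|_\infty\leq1$ indeed holds for minimizers under the stated assumptions on $B_0$ (it does, by the standard argument on the first GL equation), since this bound is used repeatedly to make the curl-error and potential-gauge terms lower order.
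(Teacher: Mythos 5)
Your proposal is correct and follows essentially the same route as the paper: an Agmon-type estimate obtained by testing the first Ginzburg--Landau equation against $f^2\overline{\psi}$ with $f=\chi\,e^{\varepsilon\sqrt{\kappa H}\,\mathrm{dist}(\cdot,\partial\Omega_1)}$ and a cutoff at scale $(\kappa H)^{-1/2}$, combined with the spectral lower bound $\|(\nabla-i\kappa H\mathbf{A})\phi\|_{L^2(\Omega_1)}^2\geq\kappa H(1-o(1))\|\phi\|_{L^2(\Omega_1)}^2$ for $\phi\in C_c^\infty(\Omega_1)$, which the paper (Lemma~\ref{lem:exp-dec}) derives exactly as you do, from $\int|(\nabla-i\kappa H\mathbf{A})\phi|^2\geq\kappa H\int\curl\mathbf{A}\,|\phi|^2$ together with the magnetic-energy estimate of Corollary~\ref{cor:psi_estim} (the reason the result is stated for minimizers only). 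The absorption of $|\nabla\Phi|^2=\varepsilon^2\kappa H$ using $\varepsilon<\sqrt{\lambda}$ and the recovery of the kinetic term on the left are also as in the paper.
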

%{\clr ( we proved later that $M=1$ ?)} 

Unlike similar situations in \cite{BF, FK-jde}, we can not extend the result in  Theorem~\ref{thm:exp-dec} to {\it critical points} of the functional in \eqref{eq:GL}. The technical reason behind this is as follows. A necessary ingredient in the proof given in \cite{BF, FK-jde} is the following estimate of the magnetic energy
$$\|\curl\Ab-B_0\|_{L^2(\Omega)}=o(\kappa^{-1})\quad(\kappa\to\infty)\,.$$
For critical configurations, we have the following estimate from \cite[Lemma~10.3.2]{Fr_Hl_10}
$$\|\curl\Ab-B_0\|_{L^2(\Omega)}\leq C\kappa^{-1}\|\psi\|_{L^2(\Omega)}\|\psi\|_{L^4(\Omega)}\,.$$%o(\kappa^{-1})\quad(\kappa\to\infty)\,.$$
To control the $L^2$- and $L^4$- norms of $\psi$, we use Theorem~\ref{thm:psi-estim}. But this will give that $\|\psi\|_{L^4(\Omega)}=o(1)$ only for $H\geq |a|^{-1}\kappa$, the condition necessary to get that $g(H\kappa^{-1})=g(H\kappa^{-1} |a|)=0$. This condition does not cover all the values of $H$ in Theorem~\ref{thm:exp-dec}. As a substitute, we choose to control the magnetic energy by the estimate in Corollary~\ref{cor:psi_estim}, which is valid for {\it minimizing} configurations only.% and \ref{thm:priori}). 

\subsection*{Notation}~
%Throughout this paper:
\begin{itemize}
\item The letter $C$ denotes a positive constant whose value may change from line to line. Unless otherwise stated, the constant $C$ depends on the function $B_0$ and the domain $\Omega$, and independent of $\kappa$, $H$ and the minimizers $(\psi,\Ab)$ of the functional in \eqref{eq:GL}.
\item Given $\ell>0$ and $x=(x_1,x_2) \in \R^2$, we denote by 
$$Q_{\ell}(x)=(-\frac \ell 2+x_1,\frac \ell 2+x_1)\times(-\frac \ell 2+x_2,\frac \ell 2+x_2)$$ the square of side length $\ell$ centered at $x$ .
%, and by $\sigma_{\ell,x}$ the sign of the function $B_0$ in $Q_{\ell,x}$.
\item Let $a(\kp)$ and $b(\kp)$ be two positive functions, we write :
\begin{itemize}
\item $a(\kp) \ll b(\kp)$, if $\frac {a(\kp)}{b(\kp)} \rightarrow 0$ as $\kp \rightarrow \infty$.
\item $a(\kp) \approx b(\kp)$, if there exist constants $\kappa_0$, $C_1$ and $C_2$ such that for all $\kp\geq\kp_0,\,C_1 a(\kp)\leq b(\kp) \leq C_2 a(\kp).$
\end{itemize}
\item The quantity $o(1)$ indicates a function of $\kappa$, defined by universal quantities, the domain $\Omega$, given functions, etc and  such that $|o(1)|\ll1$. Any expression $o(1)$ is independent of the minimizer $(\psi,\Ab)$ of \eqref{eq:GL}. Similarly, $O(1)$ indicates a  function of $\kappa$, bounded by a constant independent of the minimizers of \eqref{eq:GL}. 
\item Let $n \in \N$, $p \in \N$.   We use the following Sobolev spaces :
$$W^{n,p}(\Omega):=\left\{f \in L^p(\Omega) \ | \ D^\alpha f \in L^p(\Omega),\ {\rm for\ all}\ |\alpha|\leq n \right\},$$
$$H^n(\Omega):= W^{n,2}(\Omega).$$
\end{itemize}

\subsection*{On the proofs and the organization of the paper}~
The results in this paper can be viewed as generalizations of those in \cite{SS02} already proved for the case $B_0=1$. Theorem~\ref{thm:exp-dec} is reminiscent of the exponential bounds in \cite{BF}. However, the proofs in this paper  are simpler than those in \cite{SS02} and contain new ingredients that we summarize below:
\begin{itemize}
\item We took advantage of all the available information regarding the limiting function $g(\cdot)$ proved in \cite{FK-cpde} and \cite{att2}\,;
\item We did not used the {\it a priori} elliptic estimates, e.g.  the $L^\infty$-bound  $\|(\nabla-i\kappa H\Ab)\psi\|_\infty\leq C\kappa$. However, we used the simple energy bound $\|(\nabla-i\kappa H\Ab)\psi\|_2\leq C\kappa$ together with the regularity of the $\curl$-${\rm div}$ system (cf. Theorem~\ref{thm:priori}). This method is already used for the three dimensional problem in \cite{K-JFA}\,;
\item To prove Theorem~\ref{thm:exp-dec}, we did not established {\it weak} decay estimates as done in \cite{BF}.   
\end{itemize} 

The paper is divided into seven sections and two appendices. The first section is this introduction. Section~\ref{sec:lim-en} collects the needed properties of  the limiting energy $g(\cdot)$. Section~\ref{sec:ub} establishes an upper bound of the ground state energy. Section~\ref{sec:a-p-est} proves the necessary estimates on the critical points of the functional in \eqref{eq:GL}. These estimates are used in Section~\ref{sec:lb} to establish a lower bound of the ground state energy. In Section~\ref{sec:proof}, we finish the proof of Theorem~\ref{thm:En_estim}, Corollary~\ref{cor:psi_estim} and Theorem~\ref{thm:psi-estim}. Section~\ref{sec:exp-dec} is devoted to the proof of Theorem~\ref{thm:exp-dec}. Finally, the appendices \ref{sec:gauge} and \ref{sec:reg} collect standard results that are used throughout  the paper.  
%%%%%%%%%%%%%%%%%%%%%%%%%%%%%%%%%%%%%%%%%%%%%%%%%%%%%%%%%%%%%%%%%%%%%%%%%%%%%%%%%%%%%%%%%%%%%%%%%%%%%%%%%%%
\section{The limiting energies}\label{sec:lim-en}
Let $R>0$ and $Q_R=(-R/2,R/2)\times(-R/2,R/2)$. We define the following Ginzburg-Landau energy with the constant magnetic field on $H^1(Q_R)$ by

\begin{equation}\label{eq:Gb}
\Gb(u)=\int_{Q_R} \left(b|(\nb-i\sigma {\bf A}_0)u|^2-|u|^2+\frac 12 |u|^4\right)\,dx\,.
%,\qquad \forall~u \in {\clr H_0^1(Q_R)}.
\end{equation}
%{\clr Comment: Shoudnt we modify the space to $H^1(Q_R)$ instead of $H_0^1(Q_R)$ once we have defined later $m(b,R,\sigma)$ ?? }\\

Here $b\geq 0$, $\sigma \in \{ -1,+1\}$ and ${\bf A}_0$ is the canonical magnetic potential
\begin{equation}\label{canon}
{\bf A}_0 = \frac 12 (-x_2,x_1) \qquad \forall x = (x_1,x_2) \in \R ^2,
\end{equation}
which satisfies $\curl\Ab_0=1$.

We introduce the two ground state energies
\begin{equation}\label{m0-mN}
\begin{aligned}
&m_0(b,R,\sigma)=\displaystyle\inf_{u \in H_0^1(Q_R)}
G_{b,Q_R}^\sigma(u)\,,\\
&m(b,R,\sigma)=\displaystyle\inf_{u \in H^1(Q_R)}
G_{b,Q_R}^\sigma(u).
\end{aligned}
\end{equation}

Notice that $G_{b,Q_R}^{+1}(u) =G_{b,Q_R}^{-1}(\overline{u})$. As an
immediate consequence, we observe that
\begin{equation}\label{eq:inf}
\displaystyle\inf_{u \in \mathcal V} G_{b,Q_R}^{+1}(u) = \displaystyle\inf_{u \in \mathcal V} G_{b,Q_R}^{-1}(u)\quad{\rm where}\quad
\mathcal V\in\{H_0^1(Q_R),H^1(Q_R)\},
\end{equation}
and the values of $m_0(b,R,\sigma)$ and $m(b,R,\sigma)$ are
independent of $\sigma\in\{-1,1\}$.  In the rest of the paper, we
will denote these  two values by $m_0(b,R)$ and $m(b,R)$
respectively, hence
\begin{equation} \label{eq:m}
m_0(b,R,\sigma)=m_0(b,R)\quad{\rm and}\quad m(b,R,\sigma)=m(b,R)\quad(\sigma\in\{-1,1\})\,.
\end{equation}

We cite the following result from \cite{att2} (also see \cite{
FK-cpde, SS02}).

\begin{thm}\label{thm:g_prop}~
%Let $m_0(b,R)$ be as defined in \eqref{eq:m},
\begin{enumerate}
\item For all $b\geq1$ and $R>0$, we have $m_0(b,R)=0$.
\item For all $b \in [0,\infty)$, there exists a constant $g(b)\leq 0$ such that
$$g(b)= \lim_{R\rightarrow \infty}\frac{m_0(b,R)}{R^2}=\lim_{R\rightarrow\infty} \frac{m(b,R)}{R^2} \quad {\rm and} \quad g(0)=-\frac 12.$$
\item The function $[0,\infty)\ni b \mapsto g(b)$ is continuous, non-decreasing, concave and its range is the interval $[-\frac 12,0]$.
\item There exists a constant $\gamma \in (0,\frac 12)$ such that
$$\forall b \in [0,1],\quad \gamma(b-1)^2\leq|g(b)|\leq \frac 12 (b-1)^2.$$
\item There exist constants $C$ and $R_0$ such that, for all $R\geq
R_0$ and $b\in[0,1]$,
$$
g(b)\leq
\frac{m_0(b,R)}{R^2}\leq g(b)+\frac{C}{R}\\
\quad{\rm and}\quad g(b)-\frac{C}R\leq \frac{m(b,R)}{R^2}\leq g(b)+\frac{C}{R}\,.
$$
\end{enumerate}
\end{thm}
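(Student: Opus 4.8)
The plan is to view the statement as an assembly of facts about the two infima $m_0(b,R)$ and $m(b,R)$ of the fixed-field functional $G_{b,Q_R}^\sigma$, deriving the soft properties of $g$ from convexity and a subadditivity (tiling) argument, and isolating the one analytically substantial point. First I would record the pointwise inequality $-|u|^2+\tfrac12|u|^4=\tfrac12(|u|^2-1)^2-\tfrac12\ge-\tfrac12$, which gives $m_0(b,R),m(b,R)\ge-\tfrac12R^2$, while $u\equiv0$ gives $m_0(b,R)\le0$; this already forces $\mathrm{range}(g)\subset[-\tfrac12,0]$. For (1), I would extend $u\in H_0^1(Q_R)$ by zero and use the lowest Landau level bound $\int_{Q_R}|(\nabla-i\sigma\mathbf A_0)u|^2\ge\int_{Q_R}|u|^2$ (the magnetic Laplacian with unit constant field on $\R^2$ has spectrum starting at $1$): for $b\ge1$ this yields $G_{b,Q_R}^\sigma(u)\ge(b-1)\|u\|_2^2+\tfrac12\|u\|_4^4\ge0$, so $m_0(b,R)=0$, hence $g(b)=0$ for $b\ge1$. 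The value $g(0)=-\tfrac12$ comes from the fact that at $b=0$ there is no gradient term, so $m(0,R)=-\tfrac12R^2$ (minimiser $u\equiv1$) and $m_0(0,R)=-\tfrac12R^2$ (take $u$ equal to $1$ off a thin boundary collar). Since $b\mapsto G_{b,Q_R}^\sigma(u)$ is affine and non-decreasing in $b$ for fixed $u$, the infima $m_0(b,R)$ are concave and non-decreasing in $b$, hence so is the pointwise limit $g$; concavity on $[0,\infty)$ gives continuity of $g$ on $(0,\infty)$, and in particular on $[1,\infty)$ where $g\equiv0$. The upper bound $|g(b)|\le\tfrac12(b-1)^2$ in (4) again follows from the Landau level estimate by minimising $t\mapsto(b-1)t+\tfrac12t^2$ with $t=|u|^2$, giving $G_{b,Q_R}^\sigma(u)\ge-\tfrac12(b-1)^2R^2$; together with $g(0)=-\tfrac12$, $g(1)=0$ and the intermediate value theorem this gives $\mathrm{range}(g)=[-\tfrac12,0]$.

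For (2) I would run the tiling argument: inside $Q_R$ fit $\lfloor R/\ell\rfloor^2$ disjoint translates of $Q_\ell$, place in each a gauge-rotated translate of a minimiser of $m_0(b,\ell)$, and set $u=0$ elsewhere. This is legitimate because translating $Q_\ell$ changes $\mathbf A_0$ only by a constant vector, i.e.\ by a gradient, which the gauge factor $e^{i\sigma\mathbf A_0(z)\cdot x}$ absorbs while preserving $H_0^1$; hence $m_0(b,R)\le\lfloor R/\ell\rfloor^2m_0(b,\ell)$, so $\limsup_R m_0(b,R)/R^2\le m_0(b,\ell)/\ell^2$ for every $\ell$, and therefore $\lim_R m_0(b,R)/R^2=\inf_\ell m_0(b,\ell)/\ell^2=:g(b)\le0$. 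For $m(b,R)$ the inclusion $H_0^1(Q_R)\subset H^1(Q_R)$ gives $m(b,R)\le m_0(b,R)$, hence the correct $\limsup$; the matching $\liminf$ uses an IMS partition of unity $\sum_j\chi_j^2=1$ over sub-squares of side $\ell$ with $\sum_j|\nabla\chi_j|^2\le C\ell^{-2}$, the localisation identity $G_{b,Q_R}^\sigma(u)\ge\sum_j\bigl(G_{b,Q_\ell(z_j)\cap Q_R}^\sigma(\chi_ju)-C\ell^{-2}\int\chi_j^2|u|^2\bigr)$, the bound $G_{b,Q_\ell}^\sigma(\chi_ju)\ge m_0(b,\ell)$ on interior cells, a crude bound on the $O(R/\ell)$ boundary cells, and the a priori bound $\|u\|_2^2\le CR^2$ for a minimiser; optimising $1\ll\ell\ll R$ closes (2).

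The quantitative rates in (4)--(5) require genuine constructions. The inequality $g(b)\le m_0(b,R)/R^2$ is just the definition of $g$ as an infimum, but $m_0(b,R)/R^2\le g(b)+C/R$ and the two-sided bound for $m(b,R)$ need a good trial state: I would take a \emph{periodic} almost-minimiser on a magnetic unit cell — a square of fixed side $\ell_*$ through which the flux of $\sigma\mathbf A_0$ is an integer multiple of $2\pi$, so magnetic translations act and the configuration extends periodically — tile $Q_R$ with it, and cut off in a collar of width $O(\ell_*)$ near $\partial Q_R$; the collar has area $O(R)$ and costs $O(R)$, giving $m_0(b,R),m(b,R)\le g(b)R^2+CR$. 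The lower bound $m(b,R)\ge g(b)R^2-CR$ is the IMS argument above with the crude boundary estimate upgraded to a half-plane surface-energy bound, so the boundary layer contributes $O(R)$ rather than $O(R\ell)$. The lower bound $|g(b)|\ge\gamma(b-1)^2$ in (4) is read off the same periodic trial state, whose energy density is $\le-\gamma(b-1)^2$ with $\gamma$ comparable to the reciprocal Abrikosov constant.

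The main obstacle is the remaining continuity at $b=0$, i.e.\ $\lim_{b\to0^+}g(b)=-\tfrac12$; note that concavity alone does not give this, since a concave function may jump upward at the left endpoint of its domain. A constant trial state is useless because $\int_{Q_R}|\mathbf A_0|^2\sim R^4$, so to realise the order-one flux of $\sigma\mathbf A_0$ one must build a \emph{dilute lattice} of $\sim R^2/(2\pi)$ Ginzburg--Landau vortices of core size $\sim\sqrt b$, for which $|u|\to1$ off the cores (total core area $\sim bR^2=o(R^2)$) while the magnetic gradient energy is $\lesssim b|\log b|\,R^2=o(R^2)$; the resulting energy density tends to $-\tfrac12$. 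This is precisely the low-field analysis underlying the Bethuel--Brezis--Hélein / Sandier--Serfaty theory, and it is the only ingredient of the statement that is not a soft consequence of convexity and tiling. Granting it, $g$ is continuous on all of $[0,\infty)$ and every item of the theorem is established.
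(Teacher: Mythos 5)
The paper does not prove Theorem~\ref{thm:g_prop} at all: it is quoted verbatim from the cited references (Attar's papers, Fournais--Kachmar, Sandier--Serfaty), so there is no in-paper proof to compare against. Your outline is essentially a reconstruction of the proof given in those sources, and the soft half of it is correct and complete: the pointwise bound $-|u|^2+\tfrac12|u|^4\geq-\tfrac12$ pins the range, the lowest Landau level inequality $\int|(\nabla-i\sigma\mathbf{A}_0)u|^2\geq\int|u|^2$ for $u\in H^1_0$ gives item (1) and the upper bound $|g(b)|\leq\tfrac12(b-1)^2$, affineness in $b$ gives monotonicity and concavity, the gauge identity $\mathbf{A}_0(x)=\mathbf{A}_0(x-z)+\nabla(\mathbf{A}_0(z)\cdot x)$ legitimizes the tiling, and subadditivity of $m_0$ yields the existence of the limit as $\inf_\ell m_0(b,\ell)/\ell^2$. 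You also correctly isolate the two genuinely analytic inputs that the cited papers supply: the magnetically periodic (Abrikosov-type) trial state, which is what produces both the $O(R)$ rate in item (5) and the lower bound $|g(b)|\geq\gamma(b-1)^2$, and the dilute vortex-lattice construction giving $g(b)\leq-\tfrac12+Cb|\log b|$, which is the only route to continuity at $b=0$ (as you rightly note, concavity plus monotonicity permit an upward jump at the left endpoint). These two constructions are named rather than carried out, so as written the proposal is an accurate roadmap rather than a proof.

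Two technical remarks on the places where your sketch would need repair if executed literally. First, for the upper bound $m_0(b,R)\leq g(b)R^2+CR$ it is not enough to tile with a periodic almost-minimizer and cut off: you must also identify the periodic energy density with $g(b)$ (e.g.\ via the exact tileability of magnetically periodic states together with the inclusion of $H^1_0(Q_\ell)$ into the quasi-periodic space), otherwise subadditivity only yields $g(b)R^2+\varepsilon R^2+O(R\ell)$, which misses the $C/R$ rate. Second, your route to $m(b,R)\geq g(b)R^2-CR$ via IMS does not close: the localization error $C\ell^{-2}\|u\|_2^2\sim CR^2/\ell^2$ forces $\ell\gtrsim\sqrt R$, while the boundary layer then has area $\sim R\ell\gtrsim R^{3/2}$, so no single choice of $\ell$ gives $O(R)$; moreover no partition of unity is needed to localize a \emph{lower} bound, since the energy density is exactly additive over a partition of $Q_R$ into cells. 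The clean argument is the reverse comparison $m_0(b,R)\leq m(b,R)+CR$, obtained by multiplying a Neumann near-minimizer (which satisfies $\|u\|_\infty\leq1$) by a cutoff supported off a unit-width collar and using the IMS identity only in that collar; combined with $m_0(b,R)\geq g(b)R^2$ this gives the stated lower bound directly.
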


%%%%%%%%%%%%%%%%%%%%%%%%%%%%%%%%%%%%%%%%%%%%%%%%%%%%%%%%%%%%%%%%%%%%%%%%%%%%%%%%%%%%%%%%%%%%%%%%%%%%%%%%%%%
\section{Energy Upper Bound}\label{sec:ub}
%\subsection{Proof of the upper bound in Theorem \ref{thm:En_estim}}

The aim of this section is to prove :

\begin{proposition}\label{prop:up}
Under the assumption of Theorem~\ref{thm:En_estim}, there exist
positive constants $C$ and $\kp_0$ such that if \eqref{eq:order}
holds, then the ground state energy $\Es$ in \eqref{eq:gr_st}
satisfies
$$\Es \leq \kp^2\int_\Om g\Big (\frac{H}{\kp}|B_0(x)|\Big )dx + C\kp^{3/2}.$$
\end{proposition}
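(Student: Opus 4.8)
The plan is to construct an explicit trial configuration $(\psi,\Ab)$ and evaluate $\GL(\psi,\Ab)$ from above. For the magnetic potential I would first fix $\Ab=\mathbf{F}$, where $\mathbf{F}\in\Hd$ is a vector potential with $\curl\mathbf{F}=B_0$; since $B_0\in L^\infty(\Omega)\subset L^2(\Omega)$ and $\Omega$ is simply connected, such an $\mathbf{F}$ exists (and lies in $H^1$ by elliptic regularity of the $\curl$--$\Div$ system, cf. the appendix), and the magnetic term $\kp^2H^2\int_\Omega|\curl\mathbf{F}-B_0|^2\,dx$ vanishes. This reduces the problem to bounding the reduced functional $\int_\Omega(|(\nb-i\kp H\mathbf{F})\psi|^2-\kp^2|\psi|^2+\tfrac{\kp^2}{2}|\psi|^4)\,dx$ over $\psi\in H^1(\Omega)$.

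Next comes the tiling construction. Let $\ell=\ell(\kp)$ be a length scale to be optimized (I expect $\ell\sim\kp^{-1/2}$ up to logarithmic factors, or more precisely chosen so that the two competing error terms balance and yield the exponent $3/2$). Cover $\Omega$ by disjoint squares $Q_\ell(z_j)$ of side $\ell$ on a lattice. On each square $Q_j$ lying well inside $\Omega_1$ (resp. $\Omega_2$), the magnetic field $B_0$ equals the constant $1$ (resp. $a$), so after a gauge transformation the local magnetic potential $\kp H\mathbf{F}$ is, to leading order, $\kp H$ times the canonical potential $\mathbf{A}_0$ for the constant field $1$ (resp. $|a|$, absorbing the sign via \eqref{eq:inf}). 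Setting $b_j=\tfrac{H}{\kp}|B_0|$ on $Q_j$ and rescaling $x\mapsto\sqrt{\kp H}\,x$ turns $Q_\ell(z_j)$ into a square $Q_{R}$ with $R=\sqrt{\kp H}\,\ell\approx\kp\ell$, and the reduced energy on $Q_j$ becomes $\tfrac{\kp^2}{\kp H}\,G_{b_j,Q_R}^{\sigma_j}(u_j)$ for the local profile $u_j$. Choosing $u_j$ to be a near-minimizer of $G_{b_j,Q_R}^{\sigma_j}$ over $H_0^1(Q_R)$ — so that $\psi$ can be extended by zero across square boundaries and glued into a global $H^1$ function — Theorem~\ref{thm:g_prop}(5) gives $m_0(b_j,R)\leq R^2 g(b_j)+CR$, i.e. the contribution of $Q_j$ is at most $\tfrac{\kp^2}{\kp H}(R^2 g(b_j)+CR)=\kp^2\ell^2 g(b_j)+C\kp^2\ell/\sqrt{\kp H}$. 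Summing over the $O(\ell^{-2})$ interior squares reconstructs $\kp^2\int g(\tfrac{H}{\kp}|B_0|)\,dx$ up to a Riemann-sum error and a total gluing error of order $\kp^2\ell^{-2}\cdot\kp^2\ell/\sqrt{\kp H}\sim\kp^{3/2}\ell^{-1}$ (using $H\approx\kp$).

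The remaining terms to control are: (i) the squares that meet $\partial\Omega$, $\Gamma$, or the corners of $\partial\Omega_1,\partial\Omega_2$ — there are $O(\ell^{-1})$ of these by Assumption~\ref{assump}, and on each we simply put $\psi=0$, contributing nothing to the upper bound while the integral $\kp^2\int g(\cdots)$ over that exceptional region is $O(\kp^2\cdot\ell\cdot|g|_\infty)=O(\kp^2\ell)$, which we must absorb; (ii) the error from replacing the true local potential $\mathbf{F}$ by its constant-curl model on each square, which after rescaling costs a factor controlled by $\|\nb(\mathbf{F}-\text{model})\|_\infty\cdot\ell$ times the magnetic energy scale — of lower order for $\ell$ small; (iii) the Riemann-sum error $|\kp^2\sum\ell^2 g(b_j)-\kp^2\int g|\leq C\kp^2\ell$ from the Lipschitz (indeed concave, locally Lipschitz away from $b=1$) regularity of $g$ in Theorem~\ref{thm:g_prop}(3)--(4). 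Collecting, the total error is $O(\kp^{3/2}\ell^{-1}+\kp^2\ell)$, minimized at $\ell\approx\kp^{-1/4}$ giving $O(\kp^{7/4})$ — which is worse than claimed, so the construction must be sharpened: one should not zero out $\psi$ on the whole exceptional strip but rather use that near $\partial\Omega$, $\Gamma$ the field is still bounded and $|g|$ is small there only if $b|B_0|\geq 1$; more carefully, the $\kp^2\ell$ boundary loss is really $\kp^2\cdot(\text{width of strip})$ and the strip can be taken of width $\sqrt{\kp H}^{-1}\approx\kp^{-1}$ rather than $\ell$ by shrinking the discarded collar, giving boundary error $O(\kp^2\cdot\kp^{-1})=O(\kp)$, negligible. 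Then the dominant error is $\kp^{3/2}\ell^{-1}$ from gluing plus $\kp^2\ell$ from the Riemann sum, wait — the Riemann sum error is genuinely $\kp^2\ell$, so one needs $\ell\approx\kp^{-1/2}$, giving gluing error $\kp^{3/2}\cdot\kp^{1/2}=\kp^2$; this still does not obviously reach $\kp^{3/2}$.

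Thus \textbf{the main obstacle}, and the point requiring the most care, is the bookkeeping of these competing errors to extract the precise exponent $3/2$: one must use the sharper form of Theorem~\ref{thm:g_prop}(5) (the $R^{-1}$ rate, not $R^{-1/2}$), choose $\ell$ so that $\sqrt{\kp H}\,\ell=R(\kp)\to\infty$ slowly, and — crucially — avoid a naive Riemann sum by instead integrating the local energy density against the \emph{true} $B_0$ on each square (which is constant there, so no Riemann error inside $\Omega_1,\Omega_2$ at all) so that the only genuine loss is the $O(\ell^{-2})\cdot CR\cdot\tfrac{\kp}{H}=O(\kp\ell^{-1}\cdot R)=O(\kp\cdot\sqrt{\kp H})=O(\kp^{3/2})$ gluing term plus the truly negligible boundary/corner and potential-approximation terms. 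With $\ell$ chosen (e.g. $\ell=\kp^{-1/2}$, $R\approx 1$ — or better $R$ a large fixed constant, then let $\kp\to\infty$), this yields exactly the bound $\Es\leq\kp^2\int_\Omega g(\tfrac{H}{\kp}|B_0|)\,dx+C\kp^{3/2}$.
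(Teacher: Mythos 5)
Your construction is exactly the one the paper uses: take $\Ab=\Fb$ with $\curl\Fb=B_0$, tile $\Omega$ by squares of side $\ell=\kappa^{-1/2}$ contained in $\Omega_1$ or $\Omega_2$, on each square gauge away $\Fb$ to the constant-field potential, rescale to $Q_{R_z}$ with $R_z=\ell\sqrt{\kappa H|B_0(z)|}$, plant a Dirichlet minimizer of $\Gb$ (extended by zero so the pieces glue into an $H^1_0$ function), and invoke Theorem~\ref{thm:g_prop}(5). The skeleton is right; what goes wrong is your error accounting, which never cleanly lands on $\kappa^{3/2}$ and at several points contradicts itself.

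The correct bookkeeping is as follows. First, the gauge replacement on each square is \emph{exact}, not approximate: since $B_0$ is constant on $Q_\ell(z)$, Lemma~\ref{A_2} gives $\Fb=\nabla\varphi_z+B_0(z)\Ab_0(x-z)$ identically there, so your error term (ii) is zero. Second, there is no Riemann-sum error either: $g\big(\tfrac H\kappa|B_0(\cdot)|\big)$ is constant on each retained square, so $\ell^2\sum_z g(b_z)=\int_{\Omega_\ell}g\big(\tfrac H\kappa|B_0|\big)\,dx$ exactly; passing from $\Omega_\ell$ to $\Omega$ costs at most $\tfrac12\kappa^2|\Omega\setminus\Omega_\ell|=O(\kappa^2\ell)=O(\kappa^{3/2})$ because $-\tfrac12\le g\le 0$ and $|\Omega\setminus\Omega_\ell|=O(\ell)$ — this is already acceptable, and your detour about shrinking the boundary collar to width $\kappa^{-1}$ is unnecessary. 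Third, the gluing error: each square contributes $\tfrac{1}{b_z}m_0(b_z,R_z)\le \ell^2\kappa^2 g(b_z)+\tfrac{C}{b_z}R_z$, and $\tfrac{1}{b_z}R_z=\tfrac{\kappa}{H|B_0(z)|}\,\ell\sqrt{\kappa H|B_0(z)|}\approx C\ell\kappa$; summed over $O(\ell^{-2})$ squares this is $C\kappa\ell^{-1}=C\kappa^{3/2}$. Your intermediate expressions ``$\kappa^{3/2}\ell^{-1}$'' and ``$O(\kappa\cdot\sqrt{\kappa H})=O(\kappa^{3/2})$'' are both wrong ($\sqrt{\kappa H}\approx\kappa$, so the latter reads $\kappa^2$), and they are what lead you to the spurious conclusions $\kappa^{7/4}$ and $\kappa^2$ mid-proof. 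Finally, note that with $\ell=\kappa^{-1/2}$ one has $R_z\approx\kappa^{1/2}\to\infty$, not $R\approx1$; keeping $R$ a fixed constant would give only $O(\kappa^2/R)$ for the gluing error and would \emph{not} reach $\kappa^{3/2}$, besides failing the hypothesis $R\ge R_0$ of Theorem~\ref{thm:g_prop}(5). With these three corrections every error term is $O(\kappa^{3/2})$ and the claimed bound follows.
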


Before writing the proof of Proposition~\ref{prop:up}, we introduce
some notation.  If $D\subset\Omega$ is an open set, we introduce the
local energy of the configuration $(\psi,{\bf A}) \in
H^1(\Om;\C)\times\Hd$ in the domain $D\subset\Om$ as follows
\begin{equation}\label{eq:local0}
\begin{aligned}
&\mathcal E_0(\psi,{\bf A};D) =\int_D\big( |(\nb-i \kp H {\bf A})\psi|^2-\kp^2|\psi|^2 +\frac 12 \kp^2|\psi|^4 \big)\,dx\,,\\
&\mathcal E(\psi,{\bf A};D)=\mathcal E_0(\psi,{\bf A};D) +(\kp H)^2 \int_\Om|{\rm curl(}{\bf A-F)|}^2\,dx.
\end{aligned}
\end{equation}

In Lemma~\ref{A_1}, we constructed a vector field  ${\bf F}$
satisfying
\begin{equation}\label{eq:F}
{\bf F} \in \Hd\quad{\rm and}\quad \curl{\bf F}= B_0\quad{\rm in}~\Om\,.
\end{equation}

\begin{proof}[Proof of Proposition~\ref{prop:up}]~

{\bf Step 1.} (Introducing a lattice of squares)

We introduce the small parameter
\begin{equation}\label{eq:ub-ell}
\ell=\kappa^{-1/2}\,.
\end{equation}
Consider the lattice $L_\ell:=\ell \mathbb Z \times \ell\mathbb Z$. Let
\begin{equation}\label{eq:I-ell}
\mathcal I_\ell^1= \{z \in L_\ell~:~\overline{Q_\ell(z)} \subset \Om_1 \}\,,\quad \mathcal I_\ell^2= \{z\in L_\ell~ :~\overline{Q_\ell(z)} \subset \Om_2\}\quad
{\rm and}~\mathcal I_\ell = \mathcal I_\ell^1 \cup \mathcal I_\ell^2\,,\end{equation}
where $Q_\ell(z)$ denotes the square of center $z$ and side-length $\ell$.
By Assumption~\ref{assump}, the number
$$N={\rm Card} \,\mathcal I_\ell$$
satisfies
\begin{equation}\label{order_N}
|\Om|\ell^{-2}-O(\ell^{-1})\leq N\leq  |\Om|\ell^{-2}\quad(\ell\to0_+)\,.
\end{equation}

{\bf Step~2.} (Defining a trial state.)

For all $z \in \mathcal I_\ell$, let  $\varphi_z \in
C^2(\overline{Q_\ell(z)})$ be the function introduced in
Lemma~\ref{A_2} and
\begin{equation}\label{eq:b-R}
b_z=\frac{H}{\kp}|B_0(z)|\quad, \quad R_z=\ell\sqrt{\kp H|B_0(z)|}\,,\quad \sigma_z=\frac{B_0(z)}{|B_0(z)|}\,.
\end{equation}
The function $\varphi_z$ satisfies
\begin{equation}\label{eq:gauge*}
{\bf F}(x)=\nb \varphi_z(x)+B_0(z){\bf A}_0(x-z),\qquad (x \in Q_\ell(z))\,.
\end{equation}
We define the function $v\in H^1_0(\Om)$ as follows,
\begin{equation*}
v(x)=
\left\{
\begin{array}{ll}
e^{i\kp H \varphi_z}\ u_{b_z,R_z,\sigma_z}\big (\frac{R_z}{\ell}(x-z)\big)            \qquad & {\rm if}\ x \in Q_\ell(z)\,,\\
0 \qquad &{\rm if}~ x \in \Om \setminus \Om_\ell\,,
\end{array}
\right.
\end{equation*}
where
\begin{equation}\label{eq:Omegal}
\Om_\ell={\rm int}\left(\displaystyle\bigcup _{z \in \mathcal I_\ell} \overline{Q_\ell(z) }\right)\,,
\end{equation}
and  $u_{b_z,R_z,\sigma_z} \in H_0^1(Q_R)$ is a minimizer of the
functional in \eqref{eq:Gb} (with
$(b,R,\sigma)=(b_z,R_z,\sigma_z))$. In the sequel, we will omit the
reference to $(b_z,R_z,\sigma_z)$ in the notation
$u_{b_z,R_z,\sigma_z}$ and write simply
\begin{equation}\label{eq:uz}
u_z=u_{b_z,R_z,\sigma_z}.
\end{equation}

{\bf Step 3.} (Energy of the trial state).

We compute the energy of the configuration $(v,{\bf F})$. We have
the obvious identities (cf. \eqref{eq:local0} and \eqref{eq:F})
\begin{equation}\label{eq:ub-decomp}
\begin{aligned}
\mathcal E(v,{\bf F};\Om) &=\int_{\Om}\Big( |(\nb-i \kp H {\bf F})v|^2-\kp^2|v|^2 +\frac 12 \kp^2|v|^4 \Big)\,dx\\
&=\displaystyle \sum_{z \in \mathcal I_\ell} \mathcal E_0(v, {\bf F};Q_\ell(z)).
\end{aligned}
\end{equation}
Using \eqref{eq:gauge*}, we write
%\begin{align*}
$$
\mathcal E_0(v,{\bf F};Q_\ell(z))
%                   =&\int_{Q_\ell(z)}\Big( |\big(\nb-i \kp H (\sigma_z|B_0(z)| {\bf A}_0(x-z)+\nb \varphi_z)\big)v|^2-\kp^2|v|^2 +\frac 12 \kp^2|v|^4\Big)\, dx\\=&
=\mathcal E_0\big(e^{-i\kp H \varphi_z}v,\sigma_z|B_0(z)|{\bf A}_0(x-z);Q_\ell(z)\big)\,.
$$
%\end{align*}

%{\bf Step 3.} Relation between a local energy and $m_0(b_z,R_z)$.

%Now, express $\mathcal E_0\big(e^{-i\kp H \varphi_z}v,\sigma_z|B_0(z)|{\bf A}_0(x-z);Q_\ell(z)\big)$ in terms of $m_0(b_z,R_z)$ by
By doing the  change of variable $y=\displaystyle\frac
{R_z}{\ell}(x-z)$, we get
%\begin{align*}
%\begin{multline*}
$$
\mathcal E_0(e^{-i\kp H \varphi_z}v,\sigma_z|B_0(z)|{\bf
A}_0(x-z);Q_\ell(z))
%&\int_{Q_{R_z}}\Big(\big|\big( \frac {R_z} \ell \nb_y -i \kp H \sigma_z|B_0(z)|{\bf A}_0(\frac \ell {R_z} y)\big)u_{b_z,R_z,\sigma_z}(y)\big|^2\\
%                                                                   &-\kp^2|u_{b_z,R_z,\sigma_z}(y)|^2+\frac{1}{2}\kp^2|u_{b_z,R_z,\sigma_z}(y)|^4 \Big) \frac {\ell^2}{R_z^2}\,dy\\
%                                                                                                                                    =&\int_{Q_{R_z}}\Big(\big|\big(\nb_y -i \sigma_z{\bf A}_0\big)u_{b_z,R_z,\sigma_z}\big|^2-\frac{\kp}{H|B_0(z)|}|u_{b_z,R_z,\sigma_z}|^2\\
%                                                                                                                                   &+ \frac{\kp}{2H|B_0(z)|}|u_{b_z,R_z,\sigma_z}|^4 \Big)\, dy\\
%
= \frac{\kp}{H|B_0(z)|}\int_{Q_{R_z}}\Big(b_z\big|\big(\nb_y -i
\sigma_z{\bf A}_0\big)u_z\big|^2-|u_z|^2 +\frac 1 2|u_z|^4 \Big)\,
dy\,
$$%\end{multline*}
%\end{align}
where $u_z$ is the function in \eqref{eq:uz} and
$(b_z,R_z,\sigma_z)$ is introduced in \eqref{eq:b-R}. By using
\eqref{eq:inf}, we get
$$\mathcal E_0(v,{\bf F};Q_\ell(z))=\mathcal E_0\big(e^{-i\kp H \varphi_z}v,\sigma_z|B_0(z)|{\bf A}_0(x-z);Q_\ell(z)\big)=\frac 1 {b_z} m_0(b_z,R_z)\,.$$
Since $\ell=\kappa^{-1/2}$ and $H\geq c_1\kappa$,  $R_z\geq 1$  (cf.
\eqref{eq:b-R}). We use  Theorem~\eqref{thm:g_prop} to write
$$m_0(b_z,R_z)\leq g(b_z)R_z^2+CR_z\,.$$
Consequently,
\begin{equation}\label{eq:E0_g}
\mathcal E_0(v,{\bf F};Q_\ell(z))\leq \ell^2\kp^2 g\Big(\frac{H}{\kp}|B_0(z)|\Big)+C \ell \kp  .
\end{equation}
We insert \eqref{eq:E0_g} into \eqref{eq:ub-decomp} to get
\begin{align*}
\mathcal E(v,{\bf F};\Om)&=\displaystyle \sum_{z \in \mathcal I_\ell} \Big(\ell^2\kp^2 g\Big(\frac{H}{\kp}|B_0(z)|\Big)+C \ell \kp\Big)\\
&\leq \kp^2 \int_{\Om_\ell} g\Big(\frac{H}{\kp}|B_0(x)|\Big)\,dx+C \ell \kp N\,,
\end{align*}
where $N={\rm Card} \,\mathcal I_\ell$.
%We recognize a Riemann sum of the function $G:x \mapsto g\big (\frac{H}{\kp}|B_0(x)|\big )$. The monotonicity of $g$ and the assumption that $B_0$ is a step function guarantees the Riemann integrability of $G$.
Now, using~\eqref{order_N} and the fact that $-\frac12\leq
g(\cdot)\leq 0$, we get
$$\mathcal E(v,{\bf F};\Om)
\leq  \kp^2 \int_{\Om} g\Big(\frac{H}{\kp}|B_0(x)|\Big)\,dx+\frac12|\Omega\setminus\Omega_\ell|\kappa^2+C \frac \kp  \ell\,.
$$
To finish the proof of Proposition~\ref{prop:up}, we use that
$\Es\leq \mathcal E(v,{\bf F};\Om)$, $\ell=\kappa^{-1/2}$ and that
the regularity  of $\partial \Om$ together with
Assumption~\ref{assump} yields 
\begin{equation}\label{eq:Om_size}
|\Om \setminus \Om_\ell|=O(\ell) \,~ {\rm as} \, \ell\to0_+.
\end{equation}
\end{proof}
%%%%%%%%%%%%%%%%%%%%%%%%%%%%%%%%%%%%%%%%%%%%%%%%%%%%%%%%%%%%%%%%%%%%%%%%%%%%%%%%%%%%%%
\section{A Priori Estimates}\label{sec:a-p-est}
In the derivation of a lower bound of the energy in \eqref{eq:GL}
various error terms arise. These terms are controlled by the
estimates that we derive in this section.

In Proposition~\ref{prop:psi}, we state a celebrated estimate of the
order parameter:
\begin{proposition}\label{prop:psi}
If $(\psi,{\bf A}) \in H^1(\Om,\C)\times H^1(\Om,\R^2)$ is a weak solution to~\eqref{eq:Euler}, then
$$\|\psi\|_{L^\infty}\leq 1.$$
\end{proposition}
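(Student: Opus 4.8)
The plan is to establish the pointwise bound $\|\psi\|_{L^\infty}\leq 1$ by a maximum-principle argument applied to the first Ginzburg-Landau equation in \eqref{eq:Euler}, namely $(\kn)^2\psi=\kappa^2(|\psi|^2-1)\psi$ with the magnetic Neumann boundary condition $\nu\cdot(\kn)\psi=0$ on $\partial\Omega$. The standard tool here is the Kato inequality for magnetic Schrödinger operators: for $\psi\in H^1_{\mathrm{loc}}$ with $(\kn)^2\psi\in L^2_{\mathrm{loc}}$, one has the distributional inequality
\begin{equation*}
\Delta|\psi|\geq -\,\mathrm{Re}\!\left(\frac{\overline{\psi}}{|\psi|}(\kn)^2\psi\right)
\end{equation*}
on the set where $\psi\neq 0$ (and in fact globally in the distributional sense). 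First I would record this inequality, then substitute the PDE to obtain $\Delta|\psi|\geq -\kappa^2(|\psi|^2-1)|\psi| = \kappa^2(1-|\psi|^2)|\psi|$ wherever $\psi\neq0$, i.e. $|\psi|$ is a subsolution of a semilinear equation on the region $\{|\psi|>1\}$.

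Next I would argue by contradiction on the region $U=\{x\in\Omega:|\psi(x)|>1\}$. On $U$ we have $\Delta|\psi|\geq \kappa^2(1-|\psi|^2)|\psi|\geq 0$ pointwise (in the weak sense), so $|\psi|$ is subharmonic on $U$. If $U$ is nonempty, then $\sup_{\overline U}|\psi|$ is attained; it cannot be attained at an interior point of $U$ unless $|\psi|$ is constant there, by the strong maximum principle for subharmonic functions. So the supremum is attained on $\partial U$. On the part of $\partial U$ lying in the interior of $\Omega$ we have $|\psi|=1$ by continuity, contradicting $|\psi|>1$ in $U$; hence the relevant part of $\partial U$ must be a subset of $\partial\Omega$. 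On that portion I would invoke the boundary condition $\nu\cdot(\kn)\psi=0$, which translates into $\partial_\nu|\psi|=\mathrm{Re}(\overline{\psi}\,\nu\cdot(\kn)\psi/|\psi|)=0$ on $\partial\Omega\cap U$, and then use the Hopf boundary-point lemma: at a boundary maximum of a subharmonic function with nonstrict sign, either the outward normal derivative is strictly positive or the function is locally constant — since $\partial_\nu|\psi|=0$, we conclude $|\psi|$ is constant near that boundary point, again forcing $|\psi|\equiv 1$ there and contradicting $|\psi|>1$. Therefore $U=\emptyset$, which is the claim.

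The main obstacle — and the point requiring the most care — is the justification of Kato's inequality together with the regularity needed to run the maximum principle cleanly. A priori $\psi$ is only a weak $H^1$ solution, so $|\psi|$ is merely $H^1$, and one must either (i) bootstrap elliptic regularity to get $\psi\in W^{2,p}_{\mathrm{loc}}$ (and up to the $C^4$ boundary, using the regularity of $\partial\Omega$ from Assumption~\ref{assump}) so that the pointwise arguments make sense, or (ii) phrase the whole argument variationally: test the weak formulation against $(|\psi|^2-1)_+\psi$, which is an admissible $H^1$ test function once $\psi\in L^\infty$ — but since $L^\infty$ is what we are proving, one should instead use the truncation $\psi\,\chi(|\psi|)$ with a smooth cutoff, obtaining $\int_{\{|\psi|>1\}}|(\kn)\psi|^2 + \kappa^2\int_{\{|\psi|>1\}}(|\psi|^2-1)|\psi|^2\,dx\leq 0$ from the diamagnetic inequality $|(\kn)\psi|\geq |\nabla|\psi||$, which immediately forces $|\{|\psi|>1\}|=0$. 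I would favor this variational route as it avoids boundary-regularity subtleties entirely: the boundary term vanishes because the test function lies in $H^1(\Omega)$ and no boundary integral survives in the weak formulation of the Neumann problem. The remaining routine check is that $(|\psi|^2-1)_+\psi\in H^1(\Omega)$, which follows from $\psi\in H^1$ and the Lipschitz character of $t\mapsto(t^2-1)_+t$ on bounded sets combined with a truncation/approximation argument; this is standard and I would not belabor it.
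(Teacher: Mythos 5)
Your proposal is correct and, in the variational branch you ultimately favor, it coincides with the argument the paper relies on: the paper does not prove Proposition~\ref{prop:psi} itself but cites \cite[Proposition~10.3.1]{Fr_Hl_10}, whose proof is exactly this test-function computation (pairing the first equation of \eqref{eq:Euler} against a truncation of $(|\psi|^2-1)_+\psi$, observing that the cross term equals $\tfrac12\int\nabla|\psi|^2\cdot\nabla(|\psi|^2-1)_+\geq 0$ and that the right-hand side is $-\kappa^2\int_{\{|\psi|>1\}}(|\psi|^2-1)^2|\psi|^2\,dx\leq 0$, whence $|\{|\psi|>1\}|=0$). The only cosmetic slip is that your displayed inequality should carry the weight $(|\psi|^2-1)_+$ on the gradient term and the square $(|\psi|^2-1)^2|\psi|^2$ in the potential term; this does not affect the conclusion, and your Kato/maximum-principle alternative, while workable, needs the $W^{2,p}$ regularity you correctly flag and sensibly avoid.
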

A detailed proof of Proposition~\ref{prop:psi} can be found
in~\cite[Proposition~10.3.1]{Fr_Hl_10}. It only needs the assumption 
that $B_0\in L^2(\Omega)$.

Proposition~\ref{prop:psi} is used in the next theorem to give a
priori estimates on the solutions  of the Ginzburg-Landau
equations~\eqref{eq:Euler}.\\
%{\clr In the next theorem , maybe there is no need for the asymptotic assumption~\ref{assump}, arent the a priori estimates valid regardless the regime of work? see Bernard book lemma 10.3.2.}

\begin{thm}\label{thm:priori}
Let $0<c_1<c_2$ and $\alpha\in(0,1)$ be constants. Suppose that the
conditions in Assumption~\ref{assump} hold.

There exist two constants $\kappa_0>0$ and $C>0$ such that, if 
\eqref{eq:order} holds  and $(\psi,{\bf A})\in H^1(\Omega)\times H^1_{\rm div}(\Omega)$ is a solution of 
\eqref{eq:Euler}, then %there exists a constant $C$ such that
\begin{enumerate}
\item $\|(\nb-i \kp H{\bf A})\psi\|_{L^2(\Om)}\leq C\kp$\,;
\item $\displaystyle\|{\rm curl(}{\bf A-F)}\|_{L^2(\Om)}\leq \frac C
\kp$\,;
\item $\Ab-\Fb\in H^2(\Omega)$ and $\displaystyle\|\Ab-\Fb\|_{H^2(\Om)}\leq \frac C \kp$\,;
\item $\Ab-\Fb\in C^{0,\alpha}(\overline{\Om})$ and $\displaystyle\|\Ab-\Fb\|_{C^{0,\alpha}(\overline \Om)}\leq \frac C \kp$\,.
\end{enumerate}
\end{thm}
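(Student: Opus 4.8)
The plan is to derive the four estimates in cascade, each feeding into the next, starting from the a priori bound $\|\psi\|_{L^\infty}\leq 1$ of Proposition~\ref{prop:psi} and the second Ginzburg--Landau equation.

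\textbf{Step 1 (Estimate (1)).} Test the first equation in \eqref{eq:Euler} against $\psi$ itself (i.e.\ integrate by parts, using the boundary condition $\nu\cdot(\kn)\psi=0$) to obtain the identity
$$\int_\Om |(\nb-i\kp H\Ab)\psi|^2\,dx=\kp^2\int_\Om(|\psi|^2-1)|\psi|^2\,dx\,.$$
Since $\|\psi\|_{L^\infty}\leq 1$, the right-hand side is bounded by $\kp^2|\Om|$, which gives $\|(\nb-i\kp H\Ab)\psi\|_{L^2(\Om)}\leq C\kp$. This is exactly the simple energy bound advertised in the introduction; no elliptic regularity is needed here.

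\textbf{Step 2 (Estimate (2)).} Look at the second equation: $-\nb^\perp(\curl\Ab-B_0)=\frac{1}{\kp H}\,\mathrm{Im}(\overline\psi(\nb-i\kp H\Ab)\psi)$ in $\Om$, together with $\curl\Ab=B_0$ on $\partial\Om$ from the fourth equation. The $L^2$-norm of the right-hand side current term is bounded using Cauchy--Schwarz by $\frac{1}{\kp H}\|\psi\|_{L^\infty}\|(\nb-i\kp H\Ab)\psi\|_{L^2(\Om)}\leq \frac{C}{\kp H}\cdot\kp=\frac{C}{H}\leq \frac{C}{c_1\kp}$, using $H\geq c_1\kp$. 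Since $\curl\Ab-B_0$ vanishes on $\partial\Om$ (as $H^1_0$, or rather $H^1$ with zero trace) and $\nb^\perp$ applied to it has controlled $L^2$-norm, a Poincaré-type inequality on $\Om$ gives $\|\curl\Ab-B_0\|_{L^2(\Om)}\leq C\|\nb(\curl\Ab-B_0)\|_{L^2(\Om)}\leq \frac{C}{\kp}$. (One must be slightly careful that $B_0$ is only piecewise constant, but $\curl\Ab-B_0\in H^1$ by the equation and has zero boundary trace, so this is fine.)

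\textbf{Step 3 (Estimates (3) and (4)).} Set $\mathbf{a}:=\Ab-\Fb$. Then $\mathrm{div}\,\mathbf{a}=0$ in $\Om$ and $\mathbf{a}\cdot\nu_{\partial\Om}=0$ on $\partial\Om$ (both $\Ab$ and $\Fb$ lie in $\Hd$), while $\curl\mathbf{a}=\curl\Ab-B_0$ which we have just bounded in $L^2$ by $C/\kp$, and moreover $\curl\mathbf{a}=0$ on $\partial\Om$. This is precisely the setting of the $\curl$--$\mathrm{div}$ regularity theorem quoted as Theorem~\ref{thm:priori}'s tool (the appendix result in Section~\ref{sec:reg}): for $\Om$ simply connected with $C^4$ boundary, the map $\mathbf{a}\mapsto(\mathrm{div}\,\mathbf{a},\curl\mathbf{a})$ with the stated boundary conditions is an isomorphism onto the appropriate spaces, so $\|\mathbf{a}\|_{H^2(\Om)}\leq C\|\curl\mathbf{a}\|_{H^1(\Om)}$. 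But we only control $\curl\mathbf{a}$ in $L^2$, not $H^1$; to upgrade, note from the second GL equation that $\nb^\perp(\curl\mathbf{a})=-\frac{1}{\kp H}\mathrm{Im}(\overline\psi(\nb-i\kp H\Ab)\psi)$, so $\|\nb(\curl\mathbf{a})\|_{L^2}\leq C/\kp$ by Step~2's computation, giving $\|\curl\mathbf{a}\|_{H^1(\Om)}\leq C/\kp$ and hence $\|\mathbf{a}\|_{H^2(\Om)}\leq C/\kp$. Finally, (4) follows from (3) by the Sobolev embedding $H^2(\Om)\hookrightarrow C^{0,\alpha}(\overline\Om)$ in dimension two, valid for any $\alpha\in(0,1)$, with the same constant scaling.

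\textbf{Main obstacle.} The delicate point is the regularity step (Step 3): one needs the $\curl$--$\mathrm{div}$ elliptic system to give $H^2$-control with a constant depending only on $\Om$, and one must verify that the right-hand side $\nb^\perp(\curl\mathbf{a})$ genuinely lies in $L^2$ with norm $O(\kp^{-1})$ — this requires knowing a priori that $\curl\mathbf{a}\in H^1$ to even write the equation, which is a bootstrap one should justify carefully (first get $\curl\mathbf{a}\in H^1$ from the equation since its gradient is an $L^2$ function, then feed into the system). A secondary subtlety is that $B_0$ is discontinuous across $\Gamma$, so $\Fb$ is only Lipschitz and $\curl\mathbf{a}=\curl\Ab-B_0$ could a priori inherit a jump; but the GL equation forces $\curl\Ab-B_0$ to have an $L^2$ gradient across $\Gamma$ too, so it is genuinely $H^1(\Om)$ globally, which is what makes the argument close. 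I would double-check the transversality condition (item (9) of Assumption~\ref{assump}) and the piecewise-smoothness of $\partial\Om_1,\partial\Om_2$ are what guarantee $\Fb$ has enough regularity for this.
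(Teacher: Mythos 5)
Your proposal is correct and follows essentially the same route as the paper: items (1)--(2) are the standard energy/Poincar\'e argument (which the paper simply quotes from \cite[Lemma~10.3.2]{Fr_Hl_10}), and items (3)--(4) are obtained exactly as in the paper by applying the $\curl$--$\Div$ elliptic estimate of Lemma~\ref{lem:reg} to $\Ab-\Fb$ (using that $\curl(\Ab-\Fb)\in H^1_0(\Omega)$ and that the second equation in \eqref{eq:Euler} gives $\|\nabla\curl(\Ab-\Fb)\|_{L^2(\Omega)}\leq C/\kappa$), followed by the Sobolev embedding $H^2(\Omega)\hookrightarrow C^{0,\alpha}(\overline\Omega)$. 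The only blemish is a sign slip in your Step~1 identity, whose right-hand side should read $\kappa^2\int_\Omega(1-|\psi|^2)|\psi|^2\,dx$; this does not affect the resulting bound.
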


\begin{proof}
%The definition of the functional in \eqref{eq:GL} and
%Proposition~\ref{prop:up} yield
%\begin{align*}
%\|(\nb-i \kp H{\bf A})\psi\|_{L^2(\Om)}^2+\kappa^2H^2&\|{\rm curl(}{\bf A-F)}\|_{L^2(\Om)}^2\\
%&\leq \Es +\kappa^2\int_\Omega|\psi|^2\,dx\\
%&\leq \kp^2\int_{\Om} g\Big(\frac{H}{\kp}|B_0(x)|\Big)\,dx+C\kp^\frac 32+\kappa^2\int_\Omega|\psi|^2\,dx\,.
%\end{align*}
%Using the inequalities $H\geq c_1\kappa$ (cf. \eqref{eq:order}),
%$-\frac12\leq g(\cdot)\leq 0$ (cf. Theorem~\ref{thm:g_prop}) and
%$|\psi|\leq 1$ (cf. Propositions~\ref{prop:psi}), we get  the
%inequalities in items (1) and (2) in Theorem~\ref{thm:priori}.
%
The
inequalities in items (1) and (2) in Theorem~\ref{thm:priori} follow from Lemma~10.3.2
in \cite{Fr_Hl_10}.

Now we prove item (3) in Theorem~\ref{thm:priori}. Let $a={\bf
A}-{\bf F} \in \Hd$. By \eqref{eq:Euler}, we know that $a\in
H^1(\Omega)$ and $\curl a\in H^1_0(\Omega)$. Using
Lemma~\ref{lem:reg} and the second equation in \eqref{eq:Euler}, we get $a\in H^2(\Omega)$ and,
$$\|{\bf A}-{\bf F}\|_{H^2(\Omega)}\leq C\|\nabla(\curl({\bf A}-{\bf
F}))\|_{L^2(\Omega)}= \frac{C}{\kappa
H}\|\overline{\psi}\,(\nabla-i\kappa H{\bf
A})\psi\|_{L^2(\Omega)}\,.$$ Using the bound $H\geq c_1\kappa$, Proposition~\ref{prop:psi} and the estimate in item (2) in Theorem~\ref{thm:priori},
we get the estimate in item (3) above.

Finally, the conclusion in item (4) in Theorem~\ref{thm:priori} is
simple a consequence of the conclusion in item (3) and  the
Sobolev embedding of $H^2(\Om)$ in $C^{0,\alpha}(\overline \Om)$.
\end{proof}

\begin{rem}
In Theorem~\ref{thm:priori}, the constant $C$ depends on $\alpha$
only in item (4).  Later in this paper, a fixed value of $\alpha$ is
chosen. For this reason, we simply denote this constant by $C$
instead of $C(\alpha)$.
\end{rem}

\begin{rem}
In Theorem~\ref{thm:priori}, Assumption~\ref{assump} is used in the derivation of items (3) and (4) only. In fact, Assumption~\ref{assump} ensures that the domains $\Omega_1$ and $\Omega_2$ satisfy the {\it cone condition}, which in turn allows us to use the Sobolev embedding theorems (cf. e.g. the proof of Lemma~\ref{lem:reg}).
\end{rem}

%%%%%%%%%%%%%%%%%%%%%%%%%%%%%%%%%%%%%%%%%%%%%%%%%%%%%%%%%%%%%%%%%%%%%%%%%%%%%%%%%%%%%%%%%%%%%%%%%%%%%%%%%%%%%%%
\section{Energy Lower Bound}\label{sec:lb}

The aim of this section is to establish a lower bound for the ground state energy in \eqref{eq:gr_st}. This will be done in two steps (cf. Lemma~\ref{lem:E0_lower} and Proposition~\ref{prop:E0_g_up} below). As a consequence of the results in this section, we will be able  to finish the proof of Theorem~\ref{thm:En_estim} and Corollary~\ref{cor:psi_estim}.

Recall that $Q_\ell(x_0)$ denotes the square of center $x_0$ and side length $\ell$. In the statements of Lemma~\ref{lem:E0_lower}, Proposition~\ref{prop:E0_g_up} and Theorem~\ref{thm:lower_bound}, we will use the functional $\mathcal E_0$ in \eqref{eq:local0}. 

\begin{lemma}\label{lem:E0_lower}
Let $\alpha \in (0,1)$ and $0<c_1<c_2$ be constants. There exist positive constants $C$ and $\kp_0$ such that, if 
\begin{itemize}
\item \eqref{eq:order} holds\,;
\item $0<\delta<1$, $0<\ell<1$, $x_0\in\Omega$\,;
\item $\overline{Q_\ell(x_0)}\subset \Omega_i$ for some $i\in\{1,2\}$\,;
\item $(\psi,{\bf A}) \in H^1(\Om,\C)\times\Hd$ is  a critical point of~\eqref{eq:GL}\,;
\item $h \in C^1(\overline{\Om}),\ \|h\|_\infty \leq 1$\,;
\end{itemize}
then  the following inequality holds 
%
%\begin{equation}\label{eq:E0_lower}
%
\begin{multline*}
\mathcal E_0(h\psi,{\bf A};Q_\ell(x_0) \geq (1-\delta)\mathcal E_0(e^{-i\kp H \eta }h\psi,\sigma_\ell(x_0)|B_0(x_0)|{\bf A}_0(x-x_0);Q_\ell(x_0))\\
-C  \big(\delta^{-1}\ell^{2\alpha+2}\kappa^2+\delta \ell^2 \kappa^2 \big)\,,
\end{multline*}
for some function  $\eta \in C^2(\overline{Q_\ell(x_0)})$\,.
%
%\end{equation}
\end{lemma}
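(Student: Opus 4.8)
The plan is to compare the local energy $\mathcal E_0(h\psi,\Ab;Q_\ell(x_0))$ with the energy of the same test function measured against the frozen constant magnetic potential $\sigma_\ell(x_0)|B_0(x_0)|{\bf A}_0(x-x_0)$, the price of the comparison being the two error terms on the right-hand side. The first reduction is a gauge change: on the square $Q_\ell(x_0)$, which lies entirely in $\Omega_i$ where $B_0$ is constant, I would invoke a lemma of the same type as Lemma~\ref{A_2} to produce $\eta\in C^2(\overline{Q_\ell(x_0)})$ with $\Fb(x)=\nabla\eta(x)+B_0(x_0){\bf A}_0(x-x_0)$, and also absorb the difference $\Ab-\Fb$. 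Writing $\tilde\Ab=\Ab-\Fb$, we have $(\nabla-i\kp H\Ab)(h\psi)=e^{i\kp H\eta}(\nabla-i\kp H[B_0(x_0){\bf A}_0(x-x_0)+\tilde\Ab])(e^{-i\kp H\eta}h\psi)$, so the only genuine perturbation is the lower-order term $-i\kp H\tilde\Ab$ inside the covariant derivative.

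Next I would expand $|(\nabla-i\kp H(\Ab_0\text{-part})-i\kp H\tilde\Ab)w|^2$ for $w=e^{-i\kp H\eta}h\psi$, getting the clean term $|(\nabla-i\kp H\,\sigma_\ell|B_0(x_0)|{\bf A}_0(\cdot-x_0))w|^2$ plus cross terms of the form $2\kp H\,\mathrm{Re}\langle i\tilde\Ab w,(\nabla-\cdots)w\rangle$ plus $(\kp H)^2|\tilde\Ab|^2|w|^2$. The Cauchy–Schwarz / Young inequality with parameter $\delta$ bounds the cross term by $\delta|(\nabla-\cdots)w|^2 + \delta^{-1}(\kp H)^2|\tilde\Ab|^2|w|^2$; integrating over $Q_\ell(x_0)$ and using $|Q_\ell(x_0)|=\ell^2$, the bound $\|\tilde\Ab\|_{C^{0,\alpha}(\overline\Omega)}\leq C/\kp$ from Theorem~\ref{thm:priori}(4) (hence $|\tilde\Ab(x)|\leq C\kp^{-1}\ell^\alpha$ on a square of side $\ell$, after also using $\tilde\Ab\cdot\nu=0$ somewhere on $\partial\Omega$ or, more simply, just $\|\tilde\Ab\|_\infty\le C/\kp$ combined with the Hölder seminorm to gain the $\ell^\alpha$), together with $H\approx\kp$, $\|h\|_\infty\le1$, $\|\psi\|_\infty\le1$ from Proposition~\ref{prop:psi}, turns the $\delta^{-1}(\kp H)^2|\tilde\Ab|^2|w|^2$ term into $O(\delta^{-1}\ell^{2\alpha}\cdot\kp^2\cdot\ell^2)=O(\delta^{-1}\ell^{2\alpha+2}\kp^2)$, which is exactly the first error term. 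The leftover $-\delta|(\nabla-\cdots)w|^2$ is nonnegative after moving it, and combined with the $-\kp^2|w|^2+\frac12\kp^2|w|^4$ terms (which are identical on both sides since $|w|=|h\psi|$) one gets, with $\mathcal E_0^{\rm clean}$ denoting the frozen-potential energy, $\mathcal E_0(h\psi,\Ab;Q_\ell)\geq(1-\delta)\mathcal E_0^{\rm clean}-C\delta^{-1}\ell^{2\alpha+2}\kp^2 - (\text{a }-\delta\cdot(-\kp^2)\|w\|_{L^2(Q_\ell)}^2\text{ type remainder})$; the last remainder is bounded by $\delta\kp^2\ell^2$ since $\|w\|_{L^2(Q_\ell)}^2\le\ell^2$, producing the second error term.

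The step I expect to be the main obstacle is making the gauge function $\eta$ and the control of $\tilde\Ab=\Ab-\Fb$ interact cleanly — specifically, verifying that the Hölder regularity $\|\Ab-\Fb\|_{C^{0,\alpha}}\le C/\kp$ (Theorem~\ref{thm:priori}(4)) really does deliver the power $\ell^{2\alpha+2}$ rather than merely $\ell^{2}$, which would be too weak once $\delta$ is later optimized; this requires using the Hölder seminorm and, crucially, evaluating $\tilde\Ab$ relative to a point where it is small (e.g. using $\tilde\Ab\cdot\nu_{\partial\Omega}=0$ on $\partial\Omega$, or an average-zero argument for $\curl$-$\mathrm{div}$ systems) so that $\sup_{Q_\ell}|\tilde\Ab|\le C\kp^{-1}\ell^\alpha$ rather than just $C\kp^{-1}$. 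Everything else — the gauge covariance identity, the Young inequality splitting, and the cancellation of the $|w|^2$ and $|w|^4$ terms — is routine. I would organize the write-up as: (i) construct $\eta$ and record the gauge identity; (ii) expand and apply Young's inequality; (iii) estimate the two remainders using Theorem~\ref{thm:priori} and Proposition~\ref{prop:psi}; (iv) collect terms.
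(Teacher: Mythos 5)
Your overall architecture is the same as the paper's: gauge away $\Fb$ via Lemma~\ref{A_2} to reduce to the frozen potential $\sigma_{x_0}|B_0(x_0)|{\bf A}_0(x-x_0)$, split the kinetic term by Young's inequality with parameter $\delta$, control the perturbation coming from $\Ab-\Fb$ using the H\"older bound of Theorem~\ref{thm:priori}(4), and pay $\delta\kp^2\ell^2$ for reinstating the factor $(1-\delta)$ in front of the potential terms (that last accounting is correct and matches the paper). However, the step you yourself single out as the main obstacle is where the proposal has a genuine gap: the claimed bound $\sup_{Q_\ell(x_0)}|\Ab-\Fb|\leq C\kp^{-1}\ell^\alpha$ is false in general, and neither of your suggested justifications repairs it. The boundary condition $(\Ab-\Fb)\cdot\nu_{\partial\Omega}=0$ lives on $\partial\Omega$, which may be at distance $O(1)$ from an interior square $Q_\ell(x_0)$, so it gives no smallness there; and there is no reason for $\Ab-\Fb$ to have zero average over $Q_\ell(x_0)$. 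The H\"older seminorm controls only the \emph{oscillation} of $\Ab-\Fb$ over the square, namely $|(\Ab-\Fb)(x)-(\Ab-\Fb)(x_0)|\leq C\kp^{-1}|x-x_0|^\alpha$, not its size.

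The paper's resolution is the missing ingredient: augment the gauge function by the \emph{linear} phase $\phi_{x_0}(x)=\big(\Ab(x_0)-\Fb(x_0)\big)\cdot x$, i.e.\ take $\eta=\varphi_{x_0}+\phi_{x_0}$. Since $\nabla\phi_{x_0}$ is exactly the constant vector $(\Ab-\Fb)(x_0)$, the effective perturbation in the covariant derivative becomes $\Ab(x)-\nabla\phi_{x_0}(x)-\Fb(x)=(\Ab-\Fb)(x)-(\Ab-\Fb)(x_0)$, which the $C^{0,\alpha}$ estimate does bound by $C\kp^{-1}\ell^\alpha$ on $Q_\ell(x_0)$; this is what produces the error term $\delta^{-1}\ell^{2\alpha+2}\kp^2$ after squaring, multiplying by $(\kp H)^2\approx\kp^4\cdot\kp^{-2}$, and integrating $h^2|\psi|^2\leq1$ over the square. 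Your opening remark about letting $\eta$ ``also absorb the difference $\Ab-\Fb$'' gestures at this, but a non-gradient field cannot be absorbed into a gauge --- only its value at the single point $x_0$ can, and that is precisely the trick. With $\phi_{x_0}$ added to $\eta$ your argument closes; without it, the first error term degrades to $\delta^{-1}\ell^{2}\kp^2$, which (as you correctly anticipate) is too weak for the subsequent optimization in Proposition~\ref{prop:E0_g_up}.
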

\begin{proof}
Let $\phi_{x_0}(x)=\big({\bf A}(x_0)-{\bf F}(x_0) \big)\cdot x$.
Using Theorem~\ref{thm:priori}, we get for all $x \in Q_\ell(x_0)$ 
\begin{equation}\label{eq:F_phi}
\begin{aligned}
|{\bf A}(x)-\nb\phi_{x_0}(x)-{\bf F}(x)|& \leq \|{\bf (A-F)}\|_{C^{0,\alpha}(\overline \Om_i)} |x-x_0|^\alpha\\
&\leq \frac C \kp \ell^\alpha.
\end{aligned}
\end{equation}
Define $\eta=\varphi_{x_0}+\phi_{x_0}$ where $\varphi_{x_0}$ is the function introduced in Lemma~\ref{A_2} and satisfying
        $${\bf F}(x)=\nb \varphi_{x_0}(x)+\sigma_{x_0}|B_0(x_0)|{\bf A}_0(x-x_0),\quad
\sigma_{x_0}=\frac{B_0(x_0)}{|B_0(x_0)|}\,,\quad x \in Q_\ell(x_0)\,. $$
Let
\begin{equation}\label{eq:u}
u=e^{-i\kp H \eta}h\psi.
\end{equation}
%We give an estimation of the energy $\mathcal E_0(\psi,{\bf A};Q_\ell(x_0))$ from below as follows :
Using the gauge invariance, the Cauchy-Schwartz inequality and \eqref{eq:F_phi}, we write
$$%\begin{multline*}
\big|(\nb-i\kp H{\bf A})h\psi \big|^2
                                        \geq   (1-\delta) \big|\big(\nb-i \kp H (\sigma_{x_0}|B_0(x_0)| {\bf A}_0(x-x_0))\big)u\big|^2-C \delta^{-1}\ell^ {2\alpha} \kp^2 h^2|\psi|^2 .
$$%\end{multline*}
Now,  by recalling the definition of $u$ and by using the estimates $\|\psi\|_\infty\leq1$ and $\|h\|_\infty\leq1$, we deduce the following lower bound of $\mathcal E_0(h\psi,{\bf A};Q_\ell(x_0))$,

$$\mathcal E_0(h\psi,{\bf A};Q_\ell(x_0))
% \int_{Q_\ell(x_0)}\Big((1-\delta) \big|\big(\nb-i \kp H (\sigma_{x_0}|B_0(x_0)| {\bf A}_0(x-x_0))\big)u\big|^2-\kp^2|u|^2+\frac{\kp^2}{2}|u|^4    \Big)\, dx \\
%                                       & 
%-C \delta^{-1}\ell ^{2\alpha} \kp^2 \int_{Q_\ell(x_0)}|\psi|^2\,dx\\
                                     \geq (1-\delta)\mathcal  E_0\big(u,\sigma_{x_0}|B_0(x_0)| {\bf A}_0(x-x_0);Q_\ell(x_0)\big)-C\big(\delta^{-1}\ell^{2\alpha+2}\kp^2+\delta \ell^2 \kp^2\big).
$$%\end{align*}
\end{proof}

\begin{proposition}\label{prop:E0_g_up}
Under the assumptions in Lemma~\ref{lem:E0_lower}, it holds
\begin{equation}\label{eq:E0_g_up}
\mathcal E_0(h\psi,{\bf A};Q_\ell(x_0))\geq \ell^2 \kp^2  g\Big(\frac H \kp |B_0(x_0)|\Big)- C \big(\ell^{2\alpha+1}\kp^2+\ell \kp+\ell^3 \kp^2 \big)\,.
\end{equation}
\end{proposition}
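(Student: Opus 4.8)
The plan is to reduce the local energy $\mathcal E_0(h\psi,{\bf A};Q_\ell(x_0))$ to the model energy $G_{b,Q_R}^\sigma$ on a rescaled square and then invoke the properties of $g(\cdot)$ collected in Theorem~\ref{thm:g_prop}. Starting from Lemma~\ref{lem:E0_lower}, I have
\[
\mathcal E_0(h\psi,{\bf A};Q_\ell(x_0))\geq (1-\delta)\,\mathcal E_0\big(u,\sigma_{x_0}|B_0(x_0)|{\bf A}_0(x-x_0);Q_\ell(x_0)\big)-C\big(\delta^{-1}\ell^{2\alpha+2}\kappa^2+\delta\ell^2\kappa^2\big),
\]
with $u=e^{-i\kappa H\eta}h\psi$. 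The first step is to perform the change of variables $y=\tfrac{R_{x_0}}{\ell}(x-x_0)$ where $R_{x_0}=\ell\sqrt{\kappa H|B_0(x_0)|}$ and $b_{x_0}=\tfrac{H}{\kappa}|B_0(x_0)|$, exactly as in Step~3 of the proof of Proposition~\ref{prop:up}. This scaling turns the magnetic potential $\sigma_{x_0}|B_0(x_0)|{\bf A}_0(x-x_0)$ into $\sigma_{x_0}{\bf A}_0(y)$ and rewrites $\mathcal E_0$ as $\tfrac{\kappa}{H|B_0(x_0)|}\,G_{b_{x_0},Q_{R_{x_0}}}^{\sigma_{x_0}}(\tilde u)$ for the rescaled function $\tilde u(y)=u(x_0+\tfrac{\ell}{R_{x_0}}y)$.

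Next I would bound the rescaled model energy from below by the infimum over $H^1(Q_{R_{x_0}})$, namely $m(b_{x_0},R_{x_0},\sigma_{x_0})=m(b_{x_0},R_{x_0})$ by \eqref{eq:m}, and then apply item~(5) of Theorem~\ref{thm:g_prop} together with item~(3) to get $m(b_{x_0},R_{x_0})\geq g(b_{x_0})R_{x_0}^2-CR_{x_0}$; note that $\ell=\kappa^{-1/2}$ together with $H\geq c_1\kappa$ guarantees $R_{x_0}\geq R_0$ for $\kappa$ large so the estimate applies, and when $b_{x_0}\geq1$ one simply uses $g(b_{x_0})=0$ and $m(b_{x_0},R_{x_0})\geq m_0(b_{x_0},R_{x_0})\cdot(\text{nothing})$—more carefully one still has the two-sided bound from item~(5). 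Undoing the scaling factor $\tfrac{\kappa}{H|B_0(x_0)|}=\tfrac{\ell^2\kappa^2}{R_{x_0}^2\cdot(\text{$\ell^2\kappa^2$ cancels})}$—concretely $\tfrac{\kappa}{H|B_0(x_0)|}R_{x_0}^2=\ell^2\kappa^2$ and $\tfrac{\kappa}{H|B_0(x_0)|}R_{x_0}=\tfrac{\ell\kappa}{\sqrt{\kappa H|B_0(x_0)|}}\cdot\sqrt{\kappa H|B_0(x_0)|}/\sqrt{\kappa H|B_0(x_0)|}$, which using $H\approx\kappa$ and $|B_0(x_0)|\in\{1,|a|\}$ is $\approx\ell\kappa$—converts the model lower bound into
\[
\mathcal E_0\big(u,\sigma_{x_0}|B_0(x_0)|{\bf A}_0(x-x_0);Q_\ell(x_0)\big)\geq \ell^2\kappa^2 g\Big(\tfrac{H}{\kappa}|B_0(x_0)|\Big)-C\ell\kappa.
\]

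Finally I would combine this with the Lemma~\ref{lem:E0_lower} inequality, multiply through by $(1-\delta)$, and use $|g|\le\tfrac12$ and $\ell<1$, $H\approx\kappa$ to absorb the $(1-\delta)$ prefactor: $(1-\delta)\ell^2\kappa^2 g(\cdot)\geq \ell^2\kappa^2 g(\cdot)-C\delta\ell^2\kappa^2$ and similarly $(1-\delta)(-C\ell\kappa)\geq -C\ell\kappa$. Collecting all the error terms gives
\[
\mathcal E_0(h\psi,{\bf A};Q_\ell(x_0))\geq \ell^2\kappa^2 g\Big(\tfrac{H}{\kappa}|B_0(x_0)|\Big)-C\big(\delta^{-1}\ell^{2\alpha+2}\kappa^2+\delta\ell^2\kappa^2+\ell\kappa\big).
\]
At this point I would choose $\delta=\ell^\alpha$ (legitimate since $0<\ell<1$ forces $0<\delta<1$), which balances $\delta^{-1}\ell^{2\alpha+2}\kappa^2=\ell^{\alpha+2}\kappa^2$ against $\delta\ell^2\kappa^2=\ell^{\alpha+2}\kappa^2$; since $\alpha\in(0,1)$ we have $\ell^{\alpha+2}\le\ell^{2\alpha+1}$ and also $\ell^{\alpha+2}\le\ell^3$ is false in general but $\ell^{2\alpha+1}$ and $\ell^3$ are both already listed in the claimed error, so bounding $\ell^{\alpha+2}\kappa^2\le\ell^{2\alpha+1}\kappa^2$ (valid since $\alpha+2\ge 2\alpha+1\iff\alpha\le1$) yields precisely the stated bound $-C(\ell^{2\alpha+1}\kappa^2+\ell\kappa+\ell^3\kappa^2)$. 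The main obstacle is purely bookkeeping: tracking the scaling constants $\tfrac{\kappa}{H|B_0(x_0)|}$, $R_{x_0}$, and the interplay of $\ell=\kappa^{-1/2}$ with $H\approx\kappa$ to make sure each rescaled error term lands in the correct power of $\ell$ and $\kappa$; there is no conceptual difficulty beyond what is already in Proposition~\ref{prop:up} and Lemma~\ref{lem:E0_lower}.
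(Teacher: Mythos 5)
Your argument is correct and follows essentially the same route as the paper's proof: apply Lemma~\ref{lem:E0_lower}, rescale to the model functional $G^{\sigma_{x_0}}_{b,Q_R}$ exactly as in \eqref{eq:E1}, bound it below by $m(b,R)\ge g(b)R^2-CR$ via Theorem~\ref{thm:g_prop}, undo the scaling, and pick $\delta$. The only deviation is your choice $\delta=\ell^{\alpha}$ where the paper takes $\delta=\ell$; both choices yield error terms dominated by the stated $C\big(\ell^{2\alpha+1}\kappa^2+\ell\kappa+\ell^3\kappa^2\big)$, so this is immaterial.
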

\begin{proof}
 Let 
$$b=\frac {H}{\kp} |B_0(x_0)|, \quad R=\ell\sqrt{\kp H |B_0(x_0)|}\,,\quad \sigma_{x_0}=\frac{B_0(x_0)}{|B_0(x_0)|}\,,$$
and define the rescaled function $v(x)=u(\frac \ell R x+x_0)$ for all $x \in Q_R=(-R/2,R/2)^2$, where the function $u$ is defined in~\eqref{eq:u}.
The change of variable $y=\frac {R}{\ell}(x-x_0)$ yields
\begin{equation}\label{eq:E1}
\mathcal  E_0\big(u,\sigma_{x_0}|B_0(x_0)| {\bf A}_0(x-x_0);Q_\ell(x_0)\big)=\frac 1b G_{b,Q_R}^{\sigma_{x_0}}(v)\,.
\end{equation}
Since $v \in H^1(Q_R)$, then by \eqref{m0-mN}, \eqref{eq:m} and Theorem~\ref{thm:g_prop},
$$
\begin{aligned}
\mathcal  E_0\big(u,\sigma_{x_0}|B_0(x_0)| {\bf A}_0(x-x_0);Q_\ell(x_0)\big)&\geq \frac 1b m(b,R)\\
&\geq \frac1b\Big(g(b)R^2-CR\Big)\,.
\end{aligned}
$$
Inserting this into the estimate in Lemma~\ref{lem:E0_lower}
{ and taking $\delta=\ell$}, we finish the proof of 
Proposition~\ref{prop:E0_g_up}.
\end{proof}

In the next theorem, we establish a lower bound of the local energy in an open subset $D$ of $\Om$. Note that,  in the particular case $h=1$ and $D=\Omega$, Theorem~\ref{thm:lower_bound} yields the lower bound in Theorem~\ref{thm:En_estim}.

\begin{theorem}\label{thm:lower_bound}
Let $\tau \in (\frac32,2)$ and $0<c_1<c_2$ be constants. Suppose that $D\subset\Omega$ is an open set with a smooth boundary. There exist constants $C>0$ and $\kappa_0>0$ such that, if
\begin{itemize}
\item \eqref{eq:order} holds\,;
%\item $0<\delta<1$, $0<\ell<1$, $x_0\in\Omega$\,;
%\item $\overline{Q_\ell(x_0)}\subset \Omega_i$ for some $i\in\{1,2\}$\,;
\item $(\psi,{\bf A}) \in H^1(\Om,\C)\times\Hd$ is  a critical point of~\eqref{eq:GL}\,;
\item $h \in C^1(\overline{\Om}),\ \|h\|_\infty \leq 1$\,;
\end{itemize}
then  the following inequality holds 
$$\mathcal E_0(h\psi,{\bf A};D)\geq \kp^2 \int_D g\Big(\frac H \kp |B_0(x_0)|\Big)\,dx- C \kp^\tau\,.$$ 
\end{theorem}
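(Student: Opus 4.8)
The plan is to deduce Theorem~\ref{thm:lower_bound} from the local lower bound in Proposition~\ref{prop:E0_g_up} by the same lattice decomposition used in the upper bound (Proposition~\ref{prop:up}), balancing the number of squares against the per-square error. First I would fix $\ell=\kappa^{-\beta}$ for a suitable exponent $\beta$ to be optimised, introduce the lattice $L_\ell=\ell\mathbb Z\times\ell\mathbb Z$, and set, in analogy with \eqref{eq:I-ell}, the index sets of squares $\overline{Q_\ell(z)}$ contained in $\Omega_1\cap D$ or $\Omega_2\cap D$; call their union $\mathcal I_\ell$ and $N=\mathrm{Card}\,\mathcal I_\ell$. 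Writing $D_\ell$ for the interior of the union of these closed squares, Assumption~\ref{assump} and the smoothness of $\partial D$ and $\partial\Omega$ give $|D\setminus D_\ell|=O(\ell)$ and $N\le|D|\ell^{-2}$.

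Next I would split the energy. Since $g\le 0$ and $|\psi|\le 1$ (Proposition~\ref{prop:psi}), on the leftover region $D\setminus D_\ell$ one has the crude bound $\mathcal E_0(h\psi,\mathbf A;D\setminus D_\ell)\ge -\kappa^2|D\setminus D_\ell|\ge -C\kappa^2\ell\ge \kappa^2\int_{D\setminus D_\ell}g(\tfrac H\kappa|B_0|)\,dx-C\kappa^2\ell$, using $-\tfrac12\le g\le 0$ on both sides. On each square $Q_\ell(z)$ with $z\in\mathcal I_\ell$, Proposition~\ref{prop:E0_g_up} gives
$$\mathcal E_0(h\psi,\mathbf A;Q_\ell(z))\ge \ell^2\kappa^2 g\Big(\tfrac H\kappa|B_0(z)|\Big)-C\big(\ell^{2\alpha+1}\kappa^2+\ell\kappa+\ell^3\kappa^2\big).$$
Summing over $z\in\mathcal I_\ell$, the main terms $\sum_z\ell^2\kappa^2 g(\tfrac H\kappa|B_0(z)|)$ form a Riemann sum that I would compare to $\kappa^2\int_{D_\ell}g(\tfrac H\kappa|B_0(x)|)\,dx$; since $B_0$ is piecewise constant (taking the value $1$ on $\Omega_1$ and $a$ on $\Omega_2$, both constant), this comparison is in fact exact on every interior square, so there is no Riemann-sum error at all — a simplification over the smooth-field case. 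Collecting everything and using $N\le|D|\ell^{-2}$, the total error is $O(\kappa^2\ell)+N\cdot C(\ell^{2\alpha+1}\kappa^2+\ell\kappa+\ell^3\kappa^2)=O(\kappa^2\ell+\ell^{2\alpha-1}\kappa^2+\kappa\ell^{-1}+\kappa^2\ell)$.

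Then I would optimise the exponent. With $\ell=\kappa^{-\beta}$ the error terms are of orders $\kappa^{2-\beta}$, $\kappa^{2-(2\alpha-1)\beta}$, $\kappa^{1+\beta}$, $\kappa^{2-\beta}$. Choosing $\alpha$ close to $1$ (which is permitted, and which is exactly why $\alpha\in(0,1)$ appears in the hypotheses) makes the second term essentially $\kappa^{2-\beta}$ as well, and balancing $\kappa^{2-\beta}$ against $\kappa^{1+\beta}$ gives $\beta=\tfrac12$, hence total error $O(\kappa^{3/2})$, which is $O(\kappa^\tau)$ for any $\tau>\tfrac32$; in fact one has freedom to pick $\beta$ slightly away from $\tfrac12$ to absorb the $\alpha<1$ loss and still land below $\kappa^\tau$ for the given $\tau\in(\tfrac32,2)$. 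Assembling the pieces yields $\mathcal E_0(h\psi,\mathbf A;D)\ge\kappa^2\int_D g(\tfrac H\kappa|B_0(x)|)\,dx-C\kappa^\tau$.

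The main obstacle I anticipate is not any single estimate but the bookkeeping near $\partial D$, $\partial\Omega_1$, $\partial\Omega_2$ and the interface $\Gamma$: one must be sure that discarding the non-interior squares only costs $O(\kappa^2\ell)$, which relies on all the regularity clauses in Assumption~\ref{assump} (piecewise-smooth boundaries with finitely many corners, $\Gamma$ meeting $\partial\Omega$ transversally) to control $|D\setminus D_\ell|=O(\ell)$, and on the fact that on the discarded strip the integrand $g(\tfrac H\kappa|B_0|)$ is itself $O(1)$ so the "error" there is genuinely of the same order $\kappa^2\ell$ rather than larger. A secondary point to handle carefully is that Proposition~\ref{prop:E0_g_up} requires $\overline{Q_\ell(x_0)}\subset\Omega_i$, so squares straddling $\Gamma$ must be thrown into the leftover region; this is automatic from the definition of $\mathcal I_\ell$ but must be stated.
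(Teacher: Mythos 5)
Your proposal is correct and follows essentially the same route as the paper: the same lattice decomposition of $D$ into squares interior to $\Omega_1\cap D$ or $\Omega_2\cap D$, the same application of Proposition~\ref{prop:E0_g_up} on each square with the exact (step-function) Riemann sum, and the same balancing of the boundary-strip loss $O(\kappa^2\ell)$ against the accumulated per-square errors; your parametrization $\ell=\kappa^{-\beta}$ with $\alpha$ taken close to $1$ reproduces the paper's explicit choice $\ell=\kappa^{1-\tau}$, $\alpha=\tfrac{1}{2(\tau-1)}$. Your treatment of $D\setminus D_\ell$ (dropping the nonnegative gradient and quartic terms to get a one-sided bound) is in fact slightly cleaner than the paper's two-sided estimate.
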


\begin{proof}
Let $\ell\in(0,1)$ and $\alpha\in(0,1)$ be defined as follows
\begin{equation}\label{eq:tau}
\alpha=\frac 1 {2(\tau-1)}\qquad{\rm and}\qquad \ell=\kp^{1-\tau}
\end{equation}
In particular, we observe that  $\ell$ is a function of $\kappa$ such that $\ell\ll 1$ as $\kappa\to\infty$. 
Consider the lattice $L_\ell=\ell\mathbb Z\times\ell\mathbb Z$ as  in Proposition~\ref{prop:up}. Let
\begin{equation}\label{eq:I-ell-D}
\begin{aligned}
&\mathcal I_\ell^1(D)= \{z \in L_\ell~:~\overline{Q_\ell(z)} \subset D\cap\Om_1 \}\\
&\mathcal I_\ell^2(D)= \{z\in L_\ell~ :~\overline{Q_\ell(z)} \subset D\cap\Om_2\}\\
&\mathcal I_\ell(D) = \mathcal I_\ell^1 (D)\cup \mathcal I_\ell^2(D)\,,
\end{aligned}
\end{equation}
%and
$$N_1={\rm Card} \Big(\mathcal I_\ell^1(D)\Big)\,,\quad N_2={\rm Card} \Big(\mathcal I_\ell^2(D)\Big)\,,\quad   N=N_1+N_2={\rm Card} \Big(\mathcal I_\ell(D)\Big)\,,$$
and
$$
D_\ell={\rm int}\left(\displaystyle\bigcup _{z \in \mathcal I_\ell(D)} \overline{Q_\ell(z) }\right)\,.
$$
Notice that %there exists a constant $C>0$ such that, for $\ell$ sufficiently small,
\begin{equation}\label{eq:ND}
|D|\ell^{-2}-\mathcal O(\ell^{-1})\leq N\leq  |D|\ell^{-2}\,,
\end{equation}
and
\begin{equation}\label{eq:D-Dl}
|D \setminus D_\ell|=O(\ell)\quad(\ell\to0_+)\,.
\end{equation}
Recall $b_z$ and $R_z$ defined in~\eqref{eq:b-R},
$$b_z=\frac{H}{\kp}|B_0(z)|\quad{\rm and} \quad R_z=\ell\sqrt{\kp H|B_0(z)|}\,,$$
Let $(\psi,{\bf A})$ be a minimizer of~\eqref{eq:GL}. We decompose $\mathcal E_0(h\psi,{\bf A};D)$ as follows~:
\begin{equation}\label{eq:lb-decomp*}
\mathcal E_0(h\psi,{\bf A};D)=\mathcal E_0(h\psi,{\bf A};D_\ell)+\mathcal E_0(h\psi,{\bf A};D \setminus D_{\ell})\,.\end{equation}
Using \eqref{eq:D-Dl}, $|\psi| \leq 1$, $|h|\leq 1$ and Item~(1) in Theorem~\ref{thm:priori}, we get
\begin{align}
|\mathcal E_0(h\psi,{\bf A};D \setminus D_\ell)| \leq  & \int_{D \setminus D_\ell} \big(| (\nb-i\kp H{\bf A})h\psi|^2+\kp^2 h^2|\psi|^2+\frac {\kp^2}{2}h^4|\psi|^4 \big)\,dx   \nonumber \\
                                                   \leq & C \kp^2|D \setminus D_\ell|    \nonumber \\
                                                                                                     \leq & C \kp^2 \ell. \label{eq:Dl1}
\end{align}
On the other hand, we have the obvious decomposition of the energy in $D_\ell$ as follows
$$\mathcal E_0(h\psi,{\bf A};D_\ell) = \displaystyle \sum_{z \in \mathcal I_\ell(D)} \mathcal E_0(h\psi,{\bf A};Q_\ell(z))\,.$$
Using Proposition~\ref{prop:E0_g_up} and the estimate in \eqref{eq:ND}, we get
\begin{equation*}
\mathcal E_0(h\psi,{\bf A};D_\ell)\geq \ell^2  \kp^2 \displaystyle \sum_{z \in \mathcal I_\ell(D)} g\Big(\frac H \kp |B_0(z)|\Big) - C \big(\ell^{2\alpha-1}\kp^2+\ell^{-1} \kp+\ell \kp^2 \big)\,.
\end{equation*}
             %
%Similarly to a previous step in the upper bound, we use an upper Riemann-sum  of $g$ and the fact that
Since $g(\cdot)\leq 0$, $D_\ell\subset D$ and $B_0$ is a step function,
$$\ell^2   \displaystyle \sum_{z \in \mathcal I_\ell(D)} g\Big(\frac H \kp |B_0(z)|\Big)
=\int_{D_\ell}g\Big(\frac H \kp |B_0(x)|\Big)\,dx\geq \int_{D}g\Big(\frac H \kp |B_0(x)|\Big)\,dx\,. $$
Consequently, we get the following lower bound
\begin{equation}
\mathcal E_0(h\psi,{\bf A};D_\ell)%\geq% &\kp^2 \int_{D_\ell} g\Big(\frac H \kp |B_0(x)|\Big)\,dx -C \big(\ell^{2\alpha-1}\kp^2+\ell^{-1} \kp+\ell \kp^2 \big) \nonumber\\
                                  \geq  \kp^2 \int_{D} g\Big(\frac H \kp |B_0(x)|\Big)\,dx - C \big(\ell^{2\alpha-1}\kp^2+\ell^{-1} \kp+\ell \kp^2 \big)\,. \label{eq:Dl2}
\end{equation}
Now, the choice of $\ell$ and $\alpha$ in \eqref{eq:tau}  allows us to infer from \eqref{eq:Dl1} and \eqref{eq:Dl2}
$$
\mathcal E_0(h\psi,{\bf A};D \setminus D_\ell)\geq -C \kp^\tau\quad{\rm and}\quad
\mathcal E_0(h\psi,{\bf A};D_\ell)\geq \kp^2 \int_{D} g\Big(\frac H \kp |B_0(x)|\Big)\,dx -C \kp^\tau\,.
$$
Inserting these two estimates in \eqref{eq:lb-decomp*} finishes the proof of Theorem~\ref{thm:lower_bound}.
\end{proof}
%%%

\section{Proof of the main theorems: Energy and $L^4$-norm asymptotics}\label{sec:proof}

This section is devoted to the proof of Theorem~\ref{thm:En_estim}, Corollary~\ref{cor:psi_estim} and Theorem~\ref{thm:psi-estim}.

\begin{proof}[Proof of Theorem~\ref{thm:En_estim}]
The proof follows by collecting the results in Proposition~\ref{prop:up} and Theorem~\ref{thm:lower_bound}.
\end{proof}

\begin{proof}[Proof of Corollary~\ref{cor:psi_estim}]

Let $(\psi,\Ab)$ be a critical point of \eqref{eq:GL}. In light of \eqref{eq:Euler}, the function $\psi$ satisfies
\begin{equation}\label{IBP}
-\big(\kn\big)^2\psi=\kp^2(1-|\psi|^2)\psi\ {\rm in}\ \Om\,.
\end{equation}
Multiply both sides of~\eqref{IBP} by $\overline{\psi}$, integrate by parts over $\Om$ and use the boundary condition in \eqref{eq:Euler} to obtain
%\begin{equation}\label{IBP2}
%\int_\Om \big( \big|(\nb-i\kp H {\bf A})\psi\big|^2-\kp^2|\psi|^2+\kp^2|\psi|^4 \big)\,dx\,. %-\int_{\partial \Om} \nu. (\nb-i \kappa H {\bf A})\psi \overline{\psi} \, d\sigma(x) = 0 \,.
%\end{equation}
%Using the third equation in~\eqref{eq:Euler}, we obtain
\begin{equation}\label{psi_en}
\mathcal E_0(\psi,{\bf A},\Om)=-\frac 12 \kappa^2 \int_\Om |\psi|^4\,dx\,.
\end{equation}
Here $\mathcal E_0$ is the function in \eqref{eq:local0}. Using Theorem~\ref{thm:lower_bound} with $h=1$ and $D=\Omega$, we get
\begin{equation}\label{up_psi}
\int_\Om |\psi|^4\leq -2 \int_\Om g\left(\frac H \kappa |B_0(x)|\right)\, dx +C \kappa^{\tau-2}\,.
\end{equation}
Now, we assume in addition that $(\psi,{\bf A})$ is a minimizer of~\eqref{eq:GL}.  We will determine a lower bound of $\|\psi\|_4$ matching with the upper bound in \eqref{up_psi}. To that end, we observe that
\begin{equation}\label{eq:corol-psi}
\kappa^2H^2\int_\Omega|\curl\Ab-B_0|^2\,dx+\mathcal E_0(\psi,\Ab;\Omega)=\Es\,.\end{equation}
This yields that
$$\mathcal E_0(\psi,\Ab;\Omega)\leq \Es\,.$$
The upper bound in Theorem~\ref{prop:up} gives
%\begin{align*}
$$\mathcal E_0(\psi,{\bf A};\Om)%&\leq \Es\\
                               \leq \kappa^2 \int_\Om g\left(\frac H \kappa |B_0(x)|\right)\, dx +C \kappa^{3/2}\,.
%\end{align*}
$$
Inserting this into \eqref{psi_en}, we obtain
\begin{equation}\label{low_psi}
\int_\Om |\psi|^4\geq -2 \int_\Om g\left(\frac H \kappa |B_0(x)|\right)\, dx -C \kappa^{1/2}\,.
\end{equation}
%Combining the results in~\eqref{low_psi} and~\eqref{up_psi}, we get the desired result. 
It remains to prove the estimate of the magnetic energy. In fact, we get by \eqref{eq:corol-psi}, Proposition~\ref{prop:up} and Theorem~\ref{thm:lower_bound}
$$\kappa^2H^2\int_\Omega|\curl\Ab-B_0|^2\,dx\leq \Es-\mathcal E_0(\psi,\Ab;\Omega)\leq C\kappa^\tau\,.$$
%\end{enumerate}
\end{proof}

\begin{proof}[Proof of Theorem~\ref{thm:psi-estim}]

The proof we give does not use {\it a priori} elliptic estimates, hence, it diverges from the one used by Sandier-Serfaty in \cite{SS02}. As a substitute of the elliptic estimates, we use   the {\it a priori} estimates obtained by an energy argument in Theorem~\ref{thm:priori}, and a trick that we borrow from a paper by Helffer-Kachmar in \cite{HeKa}.%, then applied successfully to the functional with a pinning term in

Note that the particular case $D=\Om$ is handled by Corollary~\ref{cor:psi_estim}.
Now we establish Theorem~\ref{thm:psi-estim} in the general case when $D$ is an open subset of $\Om$ with a smooth boundary. The proof  is decomposed into two steps:

{\bf Upper bound:}

For $\ell \in (0,1)$, we define the two sets 
\begin{equation}\label{eq:Dl'*}
\begin{aligned}
&D_\ell=\{x \in D ~: ~{\rm dist}(x,\partial D)\geq \ell\}\,,\\
&\big(\Omega\setminus D\big)_\ell=\{x \in \Omega\setminus D ~: ~{\rm dist}(x,\partial D)\geq \ell\}\,.
\end{aligned}
\end{equation}
Since the boundary of $D$ is smooth, we get 
\begin{equation}\label{eq:Dl'}
|D\setminus D_\ell|=\mathcal O(\ell)\quad(\ell\to0_+)\,.\end{equation}
However, the boundary of $\Omega\setminus D$ might have a finite number of cusps. But we still have,
\begin{equation}\label{eq:Dl'**}
\epsilon(\ell):=\Big|\big(\Omega\setminus D\big)\setminus \big(\Omega\setminus D\big)_\ell\Big|\to0\quad{\rm as~}\ell\to0_+\,.
\end{equation}
Let $\chi_\ell \in C_c^\infty(\R^2)$ be a cut-off function satisfying
\begin{equation}\label{eq:chi}
0\leq \chi_\ell\leq 1\ {\rm in}\ \R^2, \quad {\rm supp}\chi_\ell \subset D,\quad \chi_\ell=1\ {\rm in}\ D_\ell \quad {\rm and} \quad |\nb \chi_\ell|\leq \frac C\ell\ {\rm in}\ \R^2\,,
\end{equation}
where $C$ is a universal constant.\\
Multiply both sides of the equation
$$-\big(\kn\big)^2\psi=\kp^2(1-|\psi|^2)\psi$$
by $\chi_\ell^2 \overline{\psi}$,  integrate by parts over $D$, then  use~\eqref{eq:chi}, \eqref{eq:Dl'} and $\|\psi\|_\infty \leq1$ to get
\begin{align}
\int_D \Big( \big|(\nb-i\kp H {\bf
A})\chi_\ell\psi\big|^2-\kp^2 \chi_\ell^2|\psi|^2+\kp^2 \chi_\ell^2|\psi|^4 \Big)\,dx &= \int_D |\nb \chi_\ell|^2|\psi|^2\,dx \nonumber\\
                                                                                      &\leq C\ell^{-1}\,. \label{l1}
\end{align}
Notice that $\chi_\ell^4\leq \chi_\ell^2 \leq 1$. We infer from~\eqref{l1}  
\begin{equation}\label{eq:**}
\mathcal E_0(\chi_\ell\psi,{\bf A};D) \leq -\frac 12 \kp^2 \int_D\chi_\ell^4 |\psi|^4\,dx+C \ell^{-1}\,.\end{equation}
Again, using ~\eqref{eq:chi}, \eqref{eq:Dl'} and $\|\psi\|_\infty \leq1$,  we obtain
\begin{align*}
\int_D |\psi|^4 \,dx &= \int_D \chi_\ell^4 |\psi|^4\,dx +\int_D(1- \chi_\ell^4) |\psi|^4\,dx \\
                       &\leq \int_D \chi_\ell^4 |\psi|^4\,dx+C\ell \,.
\end{align*}
Consequently, in light of \eqref{eq:**}, we write,
$$\mathcal E_0(\chi_\ell\psi,{\bf A};D) \leq -\frac 12 \kp^2 \int_D|\psi|^4\,dx+C \left(\ell \kp^2+\ell^{-1}\right)\,.$$
Now Theorem \ref{thm:lower_bound} with $h=\chi_\ell$ yields
$$ \int_D|\psi|^4\,dx \leq -2 \int_D g\Big(\frac H \kp |B_0(x_0)|\Big)\, dx+ C \left(\ell+\ell^{-1}\kp^{-2} +\kp^{\tau-2}\right)\,.$$
Choosing $\ell=\kp^{-\frac 12}$, we get
\begin{equation}\label{eq:up_psi}
\int_D|\psi|^4\,dx \leq -2 \int_D g\Big(\frac H \kp |B_0(x_0)|\Big)\, dx+ C \kp^{\tau-2}\,. 
\end{equation}
%\end{proof}
%
In a similar fashion, by using the alternative property in \eqref{eq:Dl'**}, we get,
\begin{equation}\label{eq:up_psi*}
\begin{aligned}
\int_{\Omega\setminus D}|\psi|^4\,dx &\leq -2 \int_{\Omega\setminus D} g\Big(\frac H \kp |B_0(x_0)|\Big)\, dx+ C \kp^{\tau-2}+C\epsilon(\ell)\\
&=-2 \int_{\Omega\setminus D} g\Big(\frac H \kp |B_0(x_0)|\Big)\, dx+o(1)\,. 
\end{aligned}
\end{equation}

{\bf Lower bound:}

Since $\partial D$ is smooth, $|\partial D|=0$ and the following simple decomposition holds
$$\int_D |\psi|^4 \,dx =\int_\Om |\psi|^4 \,dx -\int_{\Omega\setminus D} |\psi|^4 \,dx\,.$$
By~\eqref{eq:up_psi*}, we have
$$\int_{\Omega\setminus D}|\psi|^4\,dx \leq -2 \int_{\Omega\setminus D} g\Big(\frac H \kp |B_0(x_0)|\Big)\, dx+ o(1)\,.$$
Using~\eqref{low_psi} and \eqref{eq:up_psi}, we deduce that
$$\displaystyle\int_D |\psi|^4 \,dx=\int_D g\Big(\frac H \kp |B_0(x_0)|\Big)\, dx+o(1)\,.$$
\end{proof}

\section{Exponential decay and proof of Theorem~\ref{thm:exp-dec}}\label{sec:exp-dec}

A key ingredient in the proof of Theorem~\ref{thm:exp-dec} is

\begin{lem}\label{lem:exp-dec}
Let $\alpha\in(0,1/4)$ be a constant. Under the assumptions in Theorem~\ref{thm:exp-dec}, there exist two constants $\kappa_0>0$ and $C>0$ such that
%$$\lim_{\kappa\to\infty}{\rm err}(\kappa)=0\,,$$
%and
for all $\kappa\geq \kappa_0$ and $\phi\in C_c^\infty(\Omega_1)$,
$$\|(\nabla-i\kappa H\Ab)\phi\|^2_{L^2(\Omega_1)}\geq {\kp H}\Big(1-C\kappa^{-\alpha}\Big)\|\phi\|^2_{L^2(\Omega_1)}\,.$$ 
\end{lem}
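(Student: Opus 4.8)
The plan is to reduce the lower bound to a local statement about the Neumann Laplacian on a magnetic field of constant sign, exploiting that on $\Omega_1$ the applied field $B_0$ equals the constant $1>0$. Write $\Ab = \Fb + (\Ab-\Fb)$ where $\Fb$ is the vector field from \eqref{eq:F} with $\curl\Fb = B_0$, so that $\curl\Fb = 1$ on $\Omega_1$. Since $\phi\in C_c^\infty(\Omega_1)$ has compact support, the gauge freedom lets us work with $\Fb$ rather than $\Ab$: the quadratic form $\|(\nabla-i\kappa H\Ab)\phi\|^2_{L^2(\Omega_1)}$ differs from $\|(\nabla-i\kappa H\Fb)\phi\|^2_{L^2(\Omega_1)}$ by terms controlled by $\kappa H\|\Ab-\Fb\|_{L^\infty}\|\phi\|_{L^2}\|(\nabla-i\kappa H\Fb)\phi\|_{L^2}$ (Cauchy--Schwarz on the cross terms) plus $(\kappa H)^2\|\Ab-\Fb\|_{L^\infty}^2\|\phi\|_{L^2}^2$. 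By item (4) of Theorem~\ref{thm:priori}, $\|\Ab-\Fb\|_{C^{0,\alpha}(\overline\Omega)}\leq C/\kappa$, so $\|\Ab-\Fb\|_{L^\infty}\leq C/\kappa$ and these error terms are $O(H\|\phi\|_{L^2}\|(\nabla-i\kappa H\Fb)\phi\|_{L^2} + H^2\kappa^{-2}(\kappa H)\cdot\kappa^{-1}\|\phi\|^2)$, i.e.\ of lower order than $\kappa H\|\phi\|^2$ once one also uses the spectral bound being established (a standard bootstrap: absorb $\|(\nabla-i\kappa H\Fb)\phi\|^2$ using a small fraction of the main term).

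Next I would use the magnetic Poincaré (Montgomery-type / diamagnetic) inequality for a field of constant sign on a bounded domain. For a vector field $\Fb$ with $\curl\Fb = 1$ on $\Omega_1$ and any $\phi\in C_c^\infty(\Omega_1)$ one has
$$\|(\nabla-i\kappa H\Fb)\phi\|_{L^2(\Omega_1)}^2 \geq \kappa H\int_{\Omega_1}|\curl\Fb|\,|\phi|^2\,dx = \kappa H\,\|\phi\|_{L^2(\Omega_1)}^2,$$
which follows from the pointwise identity $|(\nabla-i\kappa H\Fb)\phi|^2 \geq \pm\kappa H\,\curl\Fb\,|\phi|^2 + (\text{divergence term})$ after integrating the divergence term to zero by the compact support of $\phi$ (this is the two-dimensional Bochner/commutator identity $[(\partial_1-i a_1),(\partial_2-i a_2)] = -i(\partial_1 a_2-\partial_2 a_1)$). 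Because $\curl\Fb\equiv 1$ on $\Omega_1$, the right-hand side is exactly $\kappa H\|\phi\|^2_{L^2(\Omega_1)}$, with no loss at all — the constant is genuinely $1$, not $1-o(1)$, at the level of the field $\Fb$.

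Combining the two steps: the true quadratic form with $\Ab$ exceeds $(1-o(1))\kappa H\|\phi\|^2 - (\text{errors})$, and collecting all error terms — each carrying a power $\kappa^{-\alpha}$ after choosing the Cauchy--Schwarz splitting parameter to be $\kappa^{-\alpha}$ and using $H\approx\kappa$ — yields $\|(\nabla-i\kappa H\Ab)\phi\|^2_{L^2(\Omega_1)}\geq \kappa H(1-C\kappa^{-\alpha})\|\phi\|^2_{L^2(\Omega_1)}$, with the restriction $\alpha<1/4$ arising precisely from balancing the cross term (which costs $\kappa^{-1}\cdot(\kappa H)^{1/2}\cdot(\kappa H)^{1/2}$ relative to the main term, i.e.\ $\kappa^{-1/2}$-type) against the quadratic error and against the gain needed in the bootstrap. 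The main obstacle is the bookkeeping in this last balancing: one must choose the splitting parameter for the cross terms carefully so that after absorbing a controlled fraction of $\|(\nabla-i\kappa H\Fb)\phi\|^2$ back into the main term, the residual relative error is $O(\kappa^{-\alpha})$ uniformly; the Hölder (rather than merely $L^\infty$) bound on $\Ab-\Fb$ from Theorem~\ref{thm:priori} is what makes this clean, since it controls $\Ab-\Fb$ on the whole of $\overline{\Omega_1}$ with the decay $C/\kappa$ that the argument needs.
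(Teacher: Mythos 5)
There is a genuine gap in your argument, and it is precisely the obstruction the paper is designed to circumvent. Your plan replaces $\Ab$ by $\Fb$ and pays for this with error terms controlled by $\|\Ab-\Fb\|_{L^\infty}\leq C/\kappa$ (Theorem~\ref{thm:priori}(4)). But that bound is exactly of critical size, not of lower order. Expanding the square, the cross term is bounded by
$2\kappa H\|\Ab-\Fb\|_{L^\infty}\,\|(\nabla-i\kappa H\Fb)\phi\|_{L^2}\,\|\phi\|_{L^2}\leq 2CH\,\|(\nabla-i\kappa H\Fb)\phi\|_{L^2}\,\|\phi\|_{L^2}$,
and since the main term forces $\|(\nabla-i\kappa H\Fb)\phi\|_{L^2}$ to be of size $(\kappa H)^{1/2}\|\phi\|_{L^2}$ in the worst case, this cross term is of size $H(\kappa H)^{1/2}\|\phi\|^2_{L^2}\approx \kappa H\|\phi\|^2_{L^2}$ --- the same order as the quantity $\kappa H\|\phi\|^2_{L^2}$ you are trying to isolate, with a constant that need not be small. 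The absorption you propose cannot repair this: writing the cross term as $\epsilon\|(\nabla-i\kappa H\Fb)\phi\|^2+\epsilon^{-1}C^2H^2\|\phi\|^2$ and optimizing over $\epsilon$ leaves a relative error $\epsilon+\epsilon^{-1}C^2H/\kappa\geq 2C\sqrt{H/\kappa}$, which is bounded below by a positive constant since $H\geq(1+\lambda)\kappa$. To make your route work you would need $\|\Ab-\Fb\|_{L^\infty}=o(\kappa^{-1})$, which Theorem~\ref{thm:priori} does not provide; this is exactly the difficulty the authors describe in the paragraph following Theorem~\ref{thm:exp-dec}.

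The paper's proof avoids the pointwise comparison entirely: it applies the spectral inequality directly to $\Ab$, namely $\|(\nabla-i\kappa H\Ab)\phi\|^2_{L^2(\Omega_1)}\geq\kappa H\int_{\Omega_1}\curl\Ab\,|\phi|^2\,dx$, then writes $\curl\Ab=(\curl\Ab-B_0)+1$ on $\Omega_1$ and estimates the remainder by Cauchy--Schwarz as $\kappa H\|\curl\Ab-B_0\|_{L^2(\Omega)}\|\phi\|^2_{L^4(\Omega_1)}$. The key input is the \emph{minimizer-only} bound of Corollary~\ref{cor:psi_estim}, $\kappa^2H^2\int_\Omega|\curl\Ab-B_0|^2\leq C\kappa^\tau$ with $\tau=2-2\alpha$, giving $\|\curl\Ab-B_0\|_{L^2}\leq C\kappa^{-1-\alpha}=o(\kappa^{-1})$; the constraint $\alpha\in(0,1/4)$ is exactly the condition $\tau\in(\frac32,2)$, not a cross-term balancing as you suggest. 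The $L^4$ norm is then handled by Sobolev embedding plus the diamagnetic inequality with the splitting parameter $\eta=\kappa^{-1}$, which produces a gradient term that \emph{can} be absorbed because its coefficient is genuinely $o(1)$ relative to the main term. Your commutator identity for $\Fb$ is correct but is not the missing ingredient; the missing ingredient is the improved $L^2$ control of $\curl\Ab-B_0$ available only for minimizers.
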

\begin{proof}
Let $\phi\in C_c^\infty(\Omega_1)$. We write the following well known spectral estimate (cf. e.g. \cite[Lem.~1.4.1]{Fr_Hl_10})
$$\int_{\Omega_1}|(\nabla-i\kappa H\Ab)\phi|^2\,dx\geq \kappa H\int_{\Omega_1}\curl\Ab|\phi|^2\,dx\,.$$
Using the simple decomposition $\curl\Ab=(\curl \Ab-B_0)+B_0$, $B_0=1$ in $\Omega_1$, the Cauchy-Schwartz inequality and the estimate of $\|\curl\Ab-B_0\|_2$ in Corollary~\ref{cor:psi_estim} (with $\tau=2-2\alpha$), we get,
 $$
\begin{aligned}
\int_{\Omega_1}|(\nabla-i\kappa H\Ab)\phi|^2\,dx&\geq\kappa H\int_{\Omega_1}|\phi|^2\,dx+\kappa H\int_{\Omega_1}\big(\curl\Ab-B_0\big)|\phi|^2\,dx\\
&\geq\kappa H\int_{\Omega_1}|\phi|^2\,dx-\kappa H\|\curl\Ab-B_0\big\|_{L^2(\Omega)}
\|\phi\|_{L^4(\Omega_1)}^2\\
&\geq  \kappa H\Big[\|\phi\|_{L^2(\Omega_1)}^2-C\kappa^{-1-\alpha}\|\phi\|_{L^4(\Omega_1)}^2\Big]\,.
\end{aligned}
$$
Now, we use the Sobolev embedding of $H^1(\Omega_1)$ into $L^4(\Omega_1)$ and the diamagnetic inequality to get, for all $\eta\in(0,1)$ (cf. \cite[Eq.~(4.14)]{FK-jde}),
$$
\begin{aligned}
\|\phi\|_{L^4(\Omega_1)}^2&\leq C_{\rm Sob}\Big(\eta\|\,\nabla|\phi|\,\|_{L^2(\Omega_1)}^2+\eta^{-1}\|\phi\|_{L^2(\Omega_1)}^2\Big)\\
& \leq C_{\rm Sob}\Big(\eta\|(\nabla-i\kappa H\Ab)\phi\|_{L^2(\Omega_1)}^2+\eta^{-1}\|\phi\|_{L^2(\Omega_1)}^2\Big)\,.
\end{aligned}
$$
To finish the proof, we select $\eta=\kappa^{-1}$.
\end{proof}

Now, the proof of Theorem~\ref{thm:exp-dec} follows similarly as \cite[Thm.~4.1]{FK-jde}.

\begin{proof}[Proof of Theorem~\ref{thm:exp-dec}]
Define the distance function $t$ on $\Om_1$ as follows :
$$t(x)={\rm dist}(x,\partial\Om_1)\,.$$
Let $\tilde{\chi} \in C^\infty(\R)$ be a function satisfying
$$\tilde{\chi}=0\ {\rm on}\ (-\infty,\frac12],\quad \tilde{\chi}=1\ {\rm on}\ [1,\infty)\ {\rm and}\quad  |\nb \tilde{\chi}|\leq m\,,$$
where $m$ is a universal constant.\\
Define the two  functions $\chi$ and $f$ on $\Om_1$ as follows :
$$\chi(x)=\tilde{\chi}\big(\sqrt{\kp H} t(x)\big)\,,$$
$$f(x)=\chi(x) \exp \big(\varepsilon \sqrt{\kp H}\,t(x)\big)\,,$$
where $\varepsilon$ is a positive number whose value will be fixed later.\\
Using the first equation of~\eqref{eq:Euler}, we multiply both sides by $f^2\overline\psi$ and we integrate by parts over $\Om_1$, it results
\begin{align}
\int_{\Om_1} \Big( \big|(\nb-i\kp H {\bf
A})f\psi\big|^2-|\nb f|^2|\psi|^2 \Big)\,dx &= \kp^2\int_{\Om_1}\Big( |\psi|^2-|\psi|^4 \big) f^2\,dx  \nonumber\\
                                            & \leq \kp^2\int_{\Om_1} |\psi|^2 f^2\,dx\,. \label{exp1}                                                  
\end{align}
We combine the conclusions in~\eqref{exp1} and Lemma~\ref{lem:exp-dec} to get 
\begin{align}
\int_{\Om_1}|\nb f|^2|\psi|^2\,dx &\geq \big(\kp H(1-C \kp^{-\alpha})-\kp^2 \big)\|f \psi\|^2_{L^2(\Om_1)} \nonumber\\
                                  &\geq \big(\lambda-C(1+\lambda)\kp^{-\alpha}\big) \kp^2 \|f \psi\|^2_{L^2(\Om_1)}\,. \label{exp2}
\end{align} 
Now, we estimate the term on the right side of~\eqref{exp2} as follows
%\begin{align}
\begin{equation}\label{exp3}\int_{\Om_1}|\nb f|^2|\psi|^2\,dx \leq \varepsilon^2\kp H \|f \psi\|^2_{L^2(\Om_1)}+ C\kp H \int_{\Om_1\cap \{\sqrt{\kp H} t(x)\leq 1\}}|\psi|^2\,dx \nonumber \\ 
                                 % &\leq  \epsilon^2\kp^2 \|f \psi\|^2_{L^2(\Om_1)}+C_2 \kp^2 \int_{\Om_1\cap \{\sqrt{\kp H} t(x)\leq 1\}}|\psi|^2\,dx  \label{exp3}
\end{equation}
%\end{align}		
Inserting~\eqref{exp3} into~\eqref{exp2} and dividing by $\kp^2$ yields
	$$\big(\lambda-C(1+\lambda)\kp^{-\alpha}-\varepsilon^2\big) \|f \psi\|^2_{L^2(\Om_1)} \leq  C \int_{\Om_1 \cap  \{\sqrt{\kp H} t(x)\leq 1\}}|\psi|^2\,dx\,.$$		
We choose the constant $\varepsilon$ such that $0<\varepsilon<\sqrt{\lambda}$. That way, we get for $\kappa$ sufficiently large,
%\begin{equation}\label{exp4}
%\lambda \int_{\Om_1 \cap  \{\sqrt{\kp H} t(x)\geq 1\}}|f \psi|^2\,dx \leq c \int_{\Om_1 \cap\{\sqrt{\kp H} t(x)\leq 1\}}|\psi|^2\,dx\,.	
%\end{equation}
%Since $\lambda$ is a constant , hence
\begin{equation}\label{exp4}
\int_{\Om_1\cap \{\sqrt{\kp H} t(x)\geq 1\}}|f \psi|^2\,dx \leq \widetilde{C} \int_{\Om_1\cap \{\sqrt{\kp H} t(x)\leq 1\}}|\psi|^2\,dx\,,	
\end{equation}
where $\widetilde{C}$ is a constant.
Inserting~\eqref{exp2} and~\eqref{exp4} into~\eqref{exp1} finishes the proof of Theorem~\ref{thm:exp-dec}.							
\end{proof}

We conclude this paper by Theorem~\ref{thm:exp-dec*} below, whose proof is similar to that of Theorem~\ref{thm:exp-dec}. 

\begin{thm}\label{thm:exp-dec*}{\bf[Exponential decay in $\Omega_2$]}~

Let $\lambda,c_2>0$ be   two constants such that
$\frac1{|a|}+\lambda< c_2$. 
There exist constants $C,\varepsilon,\kappa_0>0$ such that, if 
$$\kappa\geq\kappa_0\,,\quad \left(\frac1{|a|}+\lambda\right)\kappa \leq H\leq c_2\kappa\,,\quad (\psi,\Ab)_{\kappa,H}~{\rm is~a~minimizer~of~}\eqref{eq:GL}\,,$$ 
then
\begin{multline*}
\int_{\Omega_2\cap\{{\rm dist}(x,\partial\Omega_2)\geq \frac{1}{\sqrt{\kappa H}}\}}
\Big(|\psi|^2+{\frac{1}{\kp H}}|(\nabla-i\kappa H\Ab)\psi|^2\Big)\,\exp\Big(2\varepsilon\sqrt{\kappa H}\,{\rm dist}(x,\partial\Omega_2)\Big)dx\\
\leq
C \int_{\Omega_2\cap\{{\rm dist}(x,\partial\Omega_1)\leq \frac{1}{\sqrt{\kappa H}}\}}
|\psi|^2\,dx\,.\end{multline*}
\end{thm}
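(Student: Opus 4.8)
The plan is to mimic the proof of Theorem~\ref{thm:exp-dec} line by line, with $\Omega_1$ replaced by $\Omega_2$ and the magnetic intensity normalized by $|a|$ rather than by $1$. First I would establish the analogue of Lemma~\ref{lem:exp-dec}: for $\phi\in C_c^\infty(\Omega_2)$, the spectral lower bound $\int_{\Omega_2}|(\nabla-i\kappa H\Ab)\phi|^2\,dx\geq \kappa H\int_{\Omega_2}\curl\Ab\,|\phi|^2\,dx$ combined with the decomposition $\curl\Ab=(\curl\Ab-B_0)+B_0$ and $B_0=a$ on $\Omega_2$ gives, after inserting the magnetic energy bound from Corollary~\ref{cor:psi_estim} (used with $\tau=2-2\alpha$) and the same Sobolev/diamagnetic trick with $\eta=\kappa^{-1}$, the estimate
\begin{equation*}
\|(\nabla-i\kappa H\Ab)\phi\|^2_{L^2(\Omega_2)}\geq \kappa H\,|a|\,\bigl(1-C\kappa^{-\alpha}\bigr)\|\phi\|^2_{L^2(\Omega_2)}\,.
\end{equation*}
The only point of care is the sign: if $a<0$ then $B_0=a=-|a|$ on $\Omega_2$, and the naive bound $\curl\Ab|\phi|^2\geq (a)|\phi|^2$ is useless; one must instead use the spectral estimate with the opposite sign of the field, i.e. apply $\int|(\nabla-i\kappa H\Ab)\phi|^2\geq -\kappa H\int\curl\Ab\,|\phi|^2$ (valid for the other orientation), which gives the same conclusion with $|a|$ in place of $a$. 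This is exactly the $\sigma\in\{-1,+1\}$ invariance already exploited in Section~\ref{sec:lim-en}.

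Next I would run the Agmon-type weight argument verbatim. Set $t(x)=\mathrm{dist}(x,\partial\Omega_2)$, take the same cut-off $\tilde\chi$, define $\chi(x)=\tilde\chi(\sqrt{\kappa H}\,t(x))$ and $f(x)=\chi(x)\exp(\varepsilon\sqrt{\kappa H}\,t(x))$. Multiplying the first Ginzburg-Landau equation by $f^2\overline\psi$ and integrating by parts over $\Omega_2$ yields, as in \eqref{exp1},
\begin{equation*}
\int_{\Omega_2}\bigl(|(\nabla-i\kappa H\Ab)f\psi|^2-|\nabla f|^2|\psi|^2\bigr)\,dx\leq \kappa^2\int_{\Omega_2}|\psi|^2 f^2\,dx\,.
\end{equation*}
Combining this with the new spectral lemma gives
\begin{equation*}
\int_{\Omega_2}|\nabla f|^2|\psi|^2\,dx\geq \bigl(|a|\kappa H(1-C\kappa^{-\alpha})-\kappa^2\bigr)\|f\psi\|^2_{L^2(\Omega_2)}\geq \bigl(\lambda-C(\tfrac1{|a|}+\lambda)\kappa^{-\alpha}\bigr)|a|\kappa^2\|f\psi\|^2_{L^2(\Omega_2)}\,,
\end{equation*}
where I used $H\geq(\tfrac1{|a|}+\lambda)\kappa$, hence $|a|H\geq(1+|a|\lambda)\kappa\geq(1+\lambda')\kappa$ for a suitable $\lambda'>0$; one then bounds $|\nabla f|^2\leq(\varepsilon^2\kappa H+Cm^2\kappa H\mathbf 1_{\{\sqrt{\kappa H}\,t\leq1\}})f^2/\ldots$ exactly as in \eqref{exp3}, absorbs the $\varepsilon^2\kappa H$ term on the left by choosing $0<\varepsilon<\sqrt{\lambda'}$, and divides by $\kappa^2$ to obtain \eqref{exp4} with $\Omega_2$. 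Finally, reinserting this into the integration-by-parts identity recovers the gradient term $\frac1{\kappa H}|(\nabla-i\kappa H\Ab)\psi|^2$ in the integrand on the left, giving the stated inequality.

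The main obstacle — really the only non-routine point — is the bookkeeping with the sign of $a$ and the precise threshold: one needs $|a|H>\kappa$, i.e. $H>\kappa/|a|$, which is exactly the hypothesis $H\geq(\tfrac1{|a|}+\lambda)\kappa$, and one must check that the slack $\lambda'$ produced after multiplying through by $|a|$ is still strictly positive and uniform, so that the choice of $\varepsilon$ and the largeness of $\kappa_0$ can be made to depend only on $\lambda$, $a$, $c_2$, $\Omega$ and $B_0$. Everything else is a transcription of the proof of Theorem~\ref{thm:exp-dec}, and the Corollary~\ref{cor:psi_estim} bound on $\|\curl\Ab-B_0\|_{L^2(\Omega)}$ is available precisely because $(\psi,\Ab)$ is assumed to be a minimizer.
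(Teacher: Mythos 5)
Your proposal is correct and follows exactly the route the paper intends: the paper gives no details for Theorem~\ref{thm:exp-dec*} beyond saying its proof is similar to that of Theorem~\ref{thm:exp-dec}, and you have correctly identified and resolved the only two points requiring care, namely using the $\sigma\in\{-1,+1\}$ version of the spectral lower bound to handle a possibly negative $a$ (yielding $|a|$), and checking that the hypothesis $H\geq(\tfrac1{|a|}+\lambda)\kappa$ produces the positive slack $\lambda'=|a|\lambda$ needed to choose $\varepsilon$. (The $\partial\Omega_1$ appearing on the right-hand side of the stated inequality is evidently a typo for $\partial\Omega_2$ in the paper, not an issue with your argument.)
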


\subsection*{Acknowledgments}
AK is supported by a research grant from  Lebanese University.

\appendix
\section{Gauge transformation}\label{sec:gauge}
\begin{lem}\label{A_1}
Suppose that $\Omega$ satisfies the conditions in Assumption~\ref{assump}. Let $B_0 \in L^2(\Omega)$. There exists a unique vector field ${\bf
F} \in H^1_{\rm div}(\Omega)$ such that
$${\rm curl}\,{\bf F}=B_0\,.$$
Furthermore, $F$ is in $C^\infty(\Omega_1)\cup
C^\infty(\Omega_2)$ and in $H^2(\Omega_1)\cup H^2(\Omega_2)$.
\end{lem}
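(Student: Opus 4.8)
The plan is to construct $\mathbf F$ via a scalar stream function and then bootstrap its regularity using the assumed structure of $B_0$ and $\Omega$. First I would solve the Dirichlet problem $-\Delta \phi = B_0$ in $\Omega$ with $\phi = 0$ on $\partial\Omega$. Since $B_0 \in L^2(\Omega) \subset H^{-1}(\Omega)$, the Lax--Milgram theorem gives a unique $\phi \in H_0^1(\Omega)$, and standard elliptic regularity (using $\partial\Omega \in C^4$ and $B_0 \in L^2$) yields $\phi \in H^2(\Omega)$. Then set $\mathbf F = \nabla^\perp \phi = (\partial_{x_2}\phi, -\partial_{x_1}\phi)$. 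One checks directly that $\curl \mathbf F = -\Delta\phi = B_0$, that $\Div \mathbf F = 0$ pointwise, that $\mathbf F \in H^1(\Omega,\R^2)$, and that $\mathbf F \cdot \nu_{\partial\Omega} = \partial_\tau \phi = 0$ on $\partial\Omega$ because $\phi$ is constant on $\partial\Omega$; hence $\mathbf F \in H^1_{\rm div}(\Omega)$.

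Next I would establish uniqueness. If $\mathbf F_1, \mathbf F_2$ both lie in $H^1_{\rm div}(\Omega)$ with $\curl \mathbf F_i = B_0$, then $\mathbf G := \mathbf F_1 - \mathbf F_2$ satisfies $\curl \mathbf G = 0$, $\Div \mathbf G = 0$ in $\Omega$ and $\mathbf G \cdot \nu = 0$ on $\partial\Omega$. Since $\Omega$ is simply connected, $\curl \mathbf G = 0$ gives $\mathbf G = \nabla\theta$ for some $\theta \in H^2(\Omega)$; then $\Delta\theta = 0$ and $\partial_\nu\theta = 0$ on $\partial\Omega$, so $\theta$ is constant and $\mathbf G = 0$. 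This is the step where Assumption~\ref{assump} (simple connectedness, $C^4$ boundary) is genuinely used.

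Finally, the improved regularity on each piece: on $\Omega_1$ we have $B_0 \equiv 1$, so $-\Delta\phi = 1$ there, and on $\Omega_2$, $B_0 \equiv a$ constant. Interior elliptic regularity immediately gives $\phi \in C^\infty(\Omega_1) \cap C^\infty(\Omega_2)$, whence $\mathbf F \in C^\infty(\Omega_1) \cap C^\infty(\Omega_2)$. For the $H^2$ statement up to the (interior) boundary pieces $\partial\Omega_1$, $\partial\Omega_2$, I would localize: away from $\Gamma$ and its finitely many corners, $\phi$ solves a constant-coefficient Poisson equation in a domain with piecewise smooth boundary, so $\phi \in H^2$ near each smooth boundary arc by the standard $H^2$ regularity theory; the possible corners of $\partial\Omega_1, \partial\Omega_2$ and the transversal intersections $\Gamma\cap\partial\Omega$ are the delicate points, but the mild singularity of $\phi$ near such corners is still $H^2$-integrable for the interior angles permitted here (and the cone condition from Assumption~\ref{assump} is what makes the Sobolev machinery applicable). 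Thus $\mathbf F \in H^2(\Omega_1) \cup H^2(\Omega_2)$.

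\textbf{Main obstacle.} The routine parts are the existence and uniqueness; the genuinely technical point is the piecewise $H^2$-regularity near the corners of $\partial\Omega_1$ and $\partial\Omega_2$ and near $\Gamma\cap\partial\Omega$. There one cannot just quote smooth-boundary elliptic theory and must instead invoke regularity results for Poisson's equation on domains with corners (or argue that across $\Gamma$ the right-hand side is piecewise constant, hence $\phi$ gains two derivatives on each side separately). I expect this to be handled by a careful localization combined with known corner-regularity estimates, which is why the lemma only claims $H^2$ on $\Omega_1$ and $\Omega_2$ separately rather than on all of $\Omega$.
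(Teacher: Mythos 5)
Your construction is exactly the one in the paper: solve $-\Delta \phi = B_0$ with Dirichlet data, set $\mathbf F=\nabla^\perp\phi$, get $C^\infty(\Omega_i)$ from interior regularity since $B_0$ is constant on each $\Omega_i$, and obtain the piecewise $H^2$ regularity from transmission/corner estimates (the paper simply cites \cite[Thm.~B.1]{Kach-aa} for this step, which you correctly flag as the only technical point). Your uniqueness argument (curl-free plus div-free plus tangential boundary condition in a simply connected domain forces the difference to vanish) is a direct proof of the inequality $\|a\|_{H^1(\Omega)}\leq C_*\|\curl a\|_{L^2(\Omega)}$ that the paper invokes instead, so the two proofs are essentially identical.
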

\begin{proof}
Let $f \in H^2(\Omega)\cap H_0^1(\Omega)$ be the unique solution of $-\Delta f=B_0$ in $\Omega$ (cf.~\cite{Fr_Hl_10}).

The vector field ${\bf F}=(\partial_{x_2}f,-\partial_{x_1}f)\in
H^1_{\rm div}(\Omega)$ and satisfies $\curl{\bf F}=B_0$.

Since $B_0$ is constant in $\Omega_i$ for $i\in\{1,2\}$, $f$ the
solution of $-\Delta f=B_0$ becomes in $C^\infty(\Omega_i)$. This
yields that $\bf F$ is in $C^\infty(\Omega_i)$, $i\in\{1,2\}$.

That $\Fb\in H^2(\Omega_i)$ follows from the regularity of the solution of the equation 
$-\Delta f=B_0$ with Dirichlet condition on $\partial\Omega$ and continuity conditions on $\Gamma=\partial\Omega_1\cap\partial\Omega_2$ (cf. \cite[Thm.~B.1]{Kach-aa}).
The uniqueness of ${\bf F}$ is a consequence of the estimate (cf.
\cite[Prop.~D.2.1]{Fr_Hl_10} or \cite[Appendix~1]{Te})
\begin{equation}\label{eq:curl-div}
\forall~a\in H^1_{\rm div}(\Omega)\,,\quad \|a\|_{H^1(\Omega)}\leq C_*\|\curl a\|_{L^2(\Omega)}\,,
\end{equation}
where $C_*>0$ is a universal constant.
\end{proof} 
\begin{lem}\label{A_2}
Let $i\in\{1,2\}$, $\ell \in(0,1)$,  $x_0 \in\Omega$ and
$Q_\ell(x_0)\subset\Omega_i$. There exists a function $\varphi_{x_0}
\in  C^2(Q_\ell(x_0))$ such that the magnetic potential ${\bf F}$
defined in Lemma~\ref{A_1} satisfies
$${\bf F}(x)=B_0(x_0)A_0(x-x_0)+\nb \varphi_{x_0}(x), \qquad (x \in Q_\ell(x_0))$$
where $B_0$ is the function defined in~\ref{assump} and ${\bf A_0}$
is the magnetic potential introduced in~\eqref{canon}.
\end{lem}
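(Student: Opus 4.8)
The plan is to reduce the claim to a local Poincaré–type lemma for the curl-free part of a vector field. On the square $Q_\ell(x_0)$, which by hypothesis is contained in the simply connected open set $\Omega_i$, the function $B_0$ is constant equal to $B_0(x_0)$, since $B_0=\mathbf 1_{\Omega_1}+a\mathbf 1_{\Omega_2}$ by Assumption~\ref{assump}. Hence the vector field
$$G(x):={\bf F}(x)-B_0(x_0){\bf A}_0(x-x_0)$$
satisfies $\curl G=\curl{\bf F}-B_0(x_0)\curl{\bf A}_0(\cdot-x_0)=B_0(x_0)-B_0(x_0)\cdot 1=0$ in $Q_\ell(x_0)$, using $\curl{\bf A}_0=1$ from~\eqref{canon}. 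So the task is exactly to show that a curl-free vector field on a square (a convex, hence simply connected, domain) is a gradient, with appropriate regularity of the potential.

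First I would record the regularity of $G$. By Lemma~\ref{A_1}, ${\bf F}\in C^\infty(\Omega_i)$, and ${\bf A}_0$ is a polynomial, so $G\in C^\infty(Q_\ell(x_0))$; in particular $G$ is (more than) $C^1$ there, which is what will give $\varphi_{x_0}\in C^2$. Then I would define $\varphi_{x_0}$ by the standard line-integral formula on the square: fixing $x_0$ as base point (note $x_0\in Q_\ell(x_0)$) and writing $G=(G_1,G_2)$, set
$$\varphi_{x_0}(x_1,x_2)=\int_{x_0^{(1)}}^{x_1}G_1(s,x_0^{(2)})\,ds+\int_{x_0^{(2)}}^{x_2}G_2(x_1,t)\,dt,$$
where $x_0=(x_0^{(1)},x_0^{(2)})$. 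Differentiating under the integral sign gives $\partial_{x_2}\varphi_{x_0}=G_2$ directly, and $\partial_{x_1}\varphi_{x_0}=G_1(x_1,x_0^{(2)})+\int_{x_0^{(2)}}^{x_2}\partial_{x_1}G_2(x_1,t)\,dt$; invoking $\curl G=\partial_{x_1}G_2-\partial_{x_2}G_1=0$ turns the integrand into $\partial_{x_2}G_1$, and the fundamental theorem of calculus collapses the term to $G_1(x_1,x_2)$. Thus $\nabla\varphi_{x_0}=G$ on $Q_\ell(x_0)$, which is precisely the assertion ${\bf F}(x)=B_0(x_0){\bf A}_0(x-x_0)+\nabla\varphi_{x_0}(x)$. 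The $C^2$ regularity of $\varphi_{x_0}$ follows since its first derivatives equal the components of $G\in C^\infty$.

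There is no serious obstacle here; the statement is a textbook Poincaré lemma on a convex set, and the only things one has to be slightly careful about are (i) checking that $B_0$ really is constant on $Q_\ell(x_0)$, which is immediate from $Q_\ell(x_0)\subset\Omega_i$ and the explicit form of $B_0$, and (ii) matching the regularity claim, for which $C^\infty$-smoothness of ${\bf F}$ inside $\Omega_i$ from Lemma~\ref{A_1} is more than enough (one could in fact get $\varphi_{x_0}\in C^\infty(Q_\ell(x_0))$, but $C^2$ is all that is needed downstream). If one prefers to avoid the explicit integral, an equally short route is to note that the closed $1$-form $G_1\,dx_1+G_2\,dx_2$ on the simply connected set $Q_\ell(x_0)$ is exact, and take $\varphi_{x_0}$ to be any primitive, shifting by a constant is irrelevant since only $\nabla\varphi_{x_0}$ appears.
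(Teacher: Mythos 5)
Your proposal is correct and follows essentially the same route as the paper: observe that $B_0$ is constant on $Q_\ell(x_0)\subset\Omega_i$, so that ${\bf F}-B_0(x_0){\bf A}_0(\cdot-x_0)$ is curl-free on the simply connected square, and conclude by the Poincar\'e lemma. The explicit line-integral primitive and the regularity bookkeeping you supply are just a fleshed-out version of the paper's two-line argument.
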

\begin{proof}
By the definition of ${\bf F}$ and ${\bf A}_0$ we have for all $x
\in Q_\ell(x_0)$,
$${\rm curl}\,{{\bf F}(x)}=B_0(x){\rm curl}{\bf A}_0(x)\quad{\rm in~}Q_\ell(x_0)\,.$$
Since  $Q_\ell(x_0)$ is simply connected and $B_0$ is constant in
$Q_\ell(x_0)$, we get the existence of the function $\varphi_{x_0}$.
\end{proof}

\section{$\curl$-$\Div$ elliptic estimate}\label{sec:reg}

\begin{lem}\label{lem:reg}
Suppose that $\Omega$ is simply connected and satisfies the conditions in Assumption~\ref{assump}. There exists a constant $C>0$ such that, if $a\in
H^1_{\rm div}(\Omega)$ and $\curl a\in H^1_0(\Omega)$, then $a\in H^2(\Omega)$ and the following inequality holds
$$\|a\|_{H^2(\Omega)}\leq C\|\nabla(\curl a)\|_{L^2(\Omega)}\,.$$
\end{lem}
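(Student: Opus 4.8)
The plan is to prove the $\curl$-$\Div$ elliptic estimate of Lemma~\ref{lem:reg} by reducing it to a scalar elliptic problem via the stream function, exactly as in the proof of Lemma~\ref{A_1}. Since $\Omega$ is simply connected and $a\in H^1_{\rm div}(\Omega)$ satisfies $\Div a=0$ in $\Omega$ and $a\cdot\nu_{\partial\Omega}=0$ on $\partial\Omega$, there exists a unique stream function $f\in H^2(\Omega)\cap H^1_0(\Omega)$ such that $a=\nabla^\perp f=(\partial_{x_2}f,-\partial_{x_1}f)$; the Dirichlet condition $f|_{\partial\Omega}=0$ encodes the vanishing of the normal component (up to an additive constant on the connected boundary component, which we fix to zero). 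Then $\curl a=-\Delta f$, so the hypothesis $\curl a\in H^1_0(\Omega)$ says precisely that $\Delta f\in H^1_0(\Omega)$.

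First I would set $g=-\curl a=\Delta f\in H^1_0(\Omega)$ and note that $f$ solves the biharmonic-type problem: $\Delta f = g$ with $f\in H^1_0(\Omega)$. Applying $H^2$-elliptic regularity for the Dirichlet Laplacian on the $C^4$ domain $\Omega$, one has $\|f\|_{H^2(\Omega)}\le C\|g\|_{L^2(\Omega)}$. To upgrade this to $H^3$ control one differentiates: for each $j$, the function $\partial_j f$ formally solves $\Delta(\partial_j f)=\partial_j g\in L^2(\Omega)$, but $\partial_j f$ does not satisfy a clean boundary condition, so instead I would invoke the standard interior-plus-boundary $H^{k+2}$ regularity estimate for $-\Delta$ with Dirichlet data on a $C^4$ boundary, applied with $g\in H^1(\Omega)$: this gives $f\in H^3(\Omega)$ with $\|f\|_{H^3(\Omega)}\le C\|g\|_{H^1(\Omega)}$. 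Since $g\in H^1_0(\Omega)$, the Poincaré inequality yields $\|g\|_{H^1(\Omega)}\le C\|\nabla g\|_{L^2(\Omega)}=C\|\nabla(\curl a)\|_{L^2(\Omega)}$, so that $\|f\|_{H^3(\Omega)}\le C\|\nabla(\curl a)\|_{L^2(\Omega)}$.

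Finally, since $a=\nabla^\perp f$ is built from first derivatives of $f$, we get $a\in H^2(\Omega)$ together with the estimate
$$\|a\|_{H^2(\Omega)}\le C\|f\|_{H^3(\Omega)}\le C\|\nabla(\curl a)\|_{L^2(\Omega)}\,,$$
which is the claim. I expect the main obstacle to be a clean justification of the $H^3$ bound for $f$ in terms of $\|\nabla g\|_{L^2(\Omega)}$ alone rather than the full $\|g\|_{H^1(\Omega)}$: one must be careful that the lower-order term $\|g\|_{L^2(\Omega)}$ appearing in naive elliptic estimates is absorbed using $g\in H^1_0(\Omega)$ and the Poincaré inequality on $\Omega$. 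One should also check that the reduction to the stream function uses only simple connectedness of $\Omega$ (needed for existence of $f$) and the $C^4$ regularity of $\partial\Omega$ from Assumption~\ref{assump} (needed for the $H^3$ Dirichlet regularity); Assumption~\ref{assump} likewise guarantees the cone condition so that the relevant Sobolev embeddings are available if one prefers to argue via local charts.
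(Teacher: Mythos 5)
Your proposal is correct and follows essentially the same route as the paper: introduce the stream function $f$ with $a=\nabla^\perp f$ and $-\Delta f=\curl a$, apply $H^2$ and $H^3$ Dirichlet elliptic regularity on the $C^4$ domain, and use the Poincar\'e inequality for $\curl a\in H^1_0(\Omega)$ to reduce the right-hand side to $\|\nabla(\curl a)\|_{L^2(\Omega)}$. The only (harmless) difference is bookkeeping: the paper keeps the lower-order terms $\|f\|_{L^2}$ and $\|\nabla f\|_{L^2}=\|a\|_{L^2}$ in the elliptic estimates and absorbs them via the $\curl$-$\Div$ inequality \eqref{eq:curl-div}, whereas you absorb the lower-order term directly at the level of $g=\curl a$.
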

\begin{proof}
Let $f\in H^1_0(\Omega)\cap H^2(\Omega)$ be the unique solution of
the Dirichlet problem $-\Delta f=\curl a$ in $\Omega$. By the
inequality in \eqref{eq:curl-div}, we get that $a=\nabla^\bot
f=(\partial_{x_2}f,-\partial_{x_1}f)$.

Since $f=0$ on $\partial\Omega$ and $\curl a\in H^1(\Omega)$, we have by the elliptic estimates
$$\|f\|_{H^2(\Omega)}\leq C\left(\|\curl
a\|_{L^2(\Omega)}+\|f\|_{L^2(\Omega)}\right)\,,$$ 
$f\in H^3(\Omega)$ and %(cf. \cite[Thm.~B.1]{Kach-aa})
$$
\begin{aligned}
\|f\|_{H^3(\Omega)}&\leq C\left(\|\curl a\|_{H^1(\Omega)}+\|f\|_{H^2(\Omega)}\right)\\
&\leq C\left(\|\curl a\|_{H^1(\Omega)}+\|f\|_{L^2(\Omega)}\right)\,.
\end{aligned}
$$
This proves that $a\in H^2(\Omega)$. Now,
since $f=0$ and $\curl a=0$ on $\partial\Omega$, we get by the
Poincar\'e inequality
$$\|f\|_{H^3(\Omega)}\leq C\left(\|\nabla(\curl
a)\|_{L^2(\Omega)}+\|\nabla f\|_{L^2(\Omega)}\right)\,.$$ To finish the
proof of Lemma~\ref{lem:reg}, we simply observe that, since
$a=\nabla^\bot f$, $\|\nabla f\|_{L^2(\Omega)}=\|a\|_{L^2(\Omega)}$
and consequently
\begin{align*}
\|a\|_{L^2(\Omega)}&\leq C_*\|\curl a\|_{L^2(\Omega)}&[{\rm By~}\eqref{eq:curl-div}]\\
&\leq C\|\nabla (\curl a)\|_{L^2(\Omega)}&[\text{\rm By~the~Poincar\'e~inquality}]\,.
\end{align*}
\end{proof}

\end{document}